\def\url@leostyle{%
 \@ifundefined{selectfont}{\def\UrlFont{\sf}}{\def\UrlFont{\scriptsize\ttfamily}}} \makeatother\urlstyle{leo}
\newtheorem{theorem}{Theorem}
\newtheorem{proposition}[theorem]{Proposition}
\newtheorem{lemma}[theorem]{Lemma}
\newtheorem{corollary}[theorem]{Corollary}
\theoremstyle{definition}
\theoremstyle{remark}
\newtheorem{remark}[theorem]{Remark}
\numberwithin{equation}{section}
\numberwithin{theorem}{section}
\def\cA{\mathcal{A}}
\def\cB{\mathcal{B}}
\def\cC{\mathcal{C}}
\def\cE{\mathcal{E}}
\def\cF{\mathcal{F}}
\def\cM{\mathcal{M}}
\def\cO{\mathcal{O}}
\def\cP{\mathcal{P}}
\def\cQ{\mathcal{Q}}
\def\cT{\mathcal{T}}
\def\bE{\mathbb{E}}
\def\bF{\mathbb{F}}
\def\bN{\mathbb{N}}
\def\bP{\mathbb{P}}
\def\bQ{\mathbb{Q}}
\def\bR{\mathbb{R}}
\def\sF{\mathscr{F}}
\newcommand{\1}{\mathbbm{1}}            
\newcommand{\set}[1]{\{#1\}}            
\renewcommand{\mid}{\;|\;}              
\title{ \vspace{-3em} 
    Nonparametric Adaptive Robust Control Under Model Uncertainty
}
\author{
        Erhan Bayraktar\thanks{E. Bayraktar is partially supported by the National Science Foundation under grant DMS-2106556 and by the Susan M. Smith chair.
        \newline \hspace*{1.45em}  Department of Mathematics, University of Michigan, Ann Arbor
        \newline \hspace*{1.45em}  530 Church Street, Ann Arbor, MI 48109, USA
        \newline \hspace*{1.45em}  Email: \url{erhan@umich.edu}, URL: \url{https://sites.lsa.umich.edu/erhan/}}
        \and
        \and Tao Chen\,\thanks{
        \newline \hspace*{1.45em}  Email: \url{chenta@umich.edu}, URL: \url{http://taochen.im}
        }}
\date{ {\small 
This Version: \today
}}
\begin{document}

\maketitle

{\footnotesize
\begin{tabular}{l@{} p{350pt}}
  \hline \\[-.2em]
  \textsc{Abstract}: \ & We consider a discrete time stochastic Markovian control problem under model uncertainty. Such uncertainty not only comes from the fact that the true probability law of the underlying stochastic process is unknown, but the parametric family of probability distributions which the true law belongs to is also unknown. We propose a nonparametric adaptive robust control methodology to deal with such problem. Our approach hinges on the following building concepts: first, using the adaptive robust paradigm to incorporate online learning and uncertainty reduction into the robust control problem; second, learning the unknown probability law through the empirical distribution, and representing uncertainty reduction in terms of a sequence of Wasserstein balls around the empirical distribution; third, using Lagrangian duality to convert the optimization over Wasserstein balls to a scalar optimization problem, and adopting a machine learning technique to achieve efficient computation of the optimal control. We illustrate our methodology by considering a utility maximization problem. Numerical comparisons show that the nonparametric adaptive robust control approach is preferable to the traditional robust frameworks.\\[1em]
\textsc{Keywords:} \ & nonparametric adaptive robust control, model uncertainty, stochastic control, adaptive robust dynamic programming, Wasserstein distance, Markovian control problem, utility maximization.
 \\
\textsc{MSC2010:} \ & 49L20, 49J55, 93E20, 93E35, 60G15, 65K05, 90C39, 90C40, 91G10, 91G60, 62G05 \\[1em]
  \hline
\end{tabular}
}

\tableofcontents

\section{Introduction}
In this paper we propose a new methodology for solving a stochastic Markovian control problem in discrete time under model uncertainty.
Unlike many works in this area that assume the unknown probability law of the underlying stochastic process belongs to some parametric family of distributions, we avoid making such postulation to prevent model misspecification.
When it comes to handling model uncertainty, there are different approaches, parametric and nonparametric, developed in the past decades to incorporate learning into solving control problems with unknown system models (cf. \cite{KV2015}, \cite{CG91}, \cite{Rieder1975}, \cite{CM2020}).
However, earlier studies show that a pure learning approach without awareness of the model risk is prone to risk caused by estimation error and often leads to overly aggressive controls and system outcomes with high variances.
On the other hand, the central idea of robust control goes back to \cite{GS1989}.
A large body of research have been devoted to this area since then, and produced fruitful resutls which are briefly summarized in Section~\ref{sec:setup}.
Robust techniques are extremely successful in dealing with model risk but if the learning phase is lacking in the framework, corresponding controls can be overly conservative and even trivial.
Our work aims to address all the issues mentioned above when handling a Markovian control problem by proposing a nonparametric adaptive robust methodology and develop an efficient numerical scheme for implementing such method.

A robust control problem can be viewed as a game between the controller and the nature.
In the traditional setup, the nature chooses the worst case model against the controller at the beginning of the game. To respond, the controller adopts a control law which determines the game strategies at all time steps through the timeline.
In a sense, both counterparties' strategies are pre-committed.
Mathematically, the controller takes a set of considered models, solves the optimization problem for every model in such set, and chooses the strategy corresponding to the worst model against the controller. We refer to \cite{HSTG2006}, \cite{HS2008}, and \cite{BasarBernhardBook1995}, for more information regarding this setup.
More recent works consider a robust control problem as a sequential game: from a fixed set of models, at each time step the nature chooses one that is the worst for the controller, and the controller will apply an optimal control in response (cf. \cite{Sirbu2014}, \cite{BCP2016}).
The main difference between the two approaches mentioned so far is that the worst case model is time independent in the former case and time dependent in the latter.
In \cite{Nutz2016}, the author presented a robust framework where the nature chooses models from a time dependent set.
In other words, the nature can pick strategies from different sets of available actions at different stages of the game.
In \cite{BCCCJ2019}, the authors specified the dynamics of such sets via recursive confidence regions of the unknown model parameters. We refer to \cite{BCC2017} for the detailed discussion of recursive construction of confidence regions.
Such idea is also utilized in this work.
The advantage of using confidence regions are twofold.
On one hand, as new realization of the random noise in the system is observed between the decision-making time points, the confidence region updates itself and naturally represents the learning of the unknown system model.
On the other hand, such sequence of sets is asymptotically shrinking in size which leads to reduction of the model uncertainty.
To the best of our knowledge, \cite{BCCCJ2019} is the first work that incorporates the idea of online learning into the robust control paradigm.
A follow-up work in \cite{BC2021} is an attempt to extend the adaptive robust control to the continuous time setup.

Note that the methods in \cite{BCCCJ2019} and \cite{BC2021} are parametric and the practical usage of such methods relies on the assumption that the family of the unknown probability law of the underlying stochastic process is known to the controller.
Some researchers have realized this drawback and adopts nonparametric statistical methods by assuming uncertainty for the family of parametric models.
To formulate a robust setup, one will define a set of probability distributions that includes the estimated distribution.
For example, in \cite{KENA2019} and \cite{OW2021}, the authors take a Wasserstein ball around the empirical distribution and use the ball as the set of considered models.
However, such setup has only been implemented in one-period control problems so far, and the feasibility of this approach in multi-period setup remains to be investigated.
To overcome this obstacle, we develop a nonparametric adaptive robust control methodology in this work to handle multi-period stochastic control problems where the family of distributions which the true law of the system model belongs to is unknown.
Naturally, we use the empirical distribution as the estimate of the distribution of the underlying stochastic process.
Another candidate for this purpose is the perturbed empirical distribution when such distribution is known to be continuous.
For construction of confidence regions in this setup, we utilize the Wasserstein ball around the empirical distribution.
There are several works on the concentration results regarding the empirical distribution and the Wasserstein distance (cf. \cite{BGM1999}, \cite{FG2015}).
Backed by these papers, one obtains a CLT-type of result for the empirical distribution that leads to construction of confidence regions through Wasserstein distance under rather mild assumptions.
Practically, numerical search of the worst case model in a set of probability distributions is extremely difficult.
Another advantage of using the Wasserstein ball as the confidecnce region is that the aforementioned task of searching for the worst case model in a Wasserstein ball can be converted to a scalar optimization problem.
Last but not the least, we implement a machine learning technique via the Gaussian process surrogates \cite{RW2006} to build regression models for the relevent value function and the optimal control.
The former surrogate enables us to proceed the backward recursion according to the dynamic programming principle, and the latter allows us for fast computation of the optimal control when applying our framework.

The rest of the paper is organized as follows. We begin Section~\ref{sec:setup} with setting up the model and in Section~\ref{sec:uncertainty_set} we discuss the contruction of confidence region for the unknown true probability law in terms of the Wasserstein ball. Such sets of distributions represent the uncertainty of the system model.
Section~\ref{sec:np_ar} is dedicated to the formulation of the nonparametric adaptive robust control framework.
We investigate the solution of the nonparametric adaptive robust control problem and derive the associated Bellman equations in Section~\ref{sec:solution}.
Also in this section, we prove the Bellman principle of optimality for the problem and show the existence of measurable worst-case model selector as well as the existence of measurable optimal control.
In Section~\ref{sec:convergence}, we discuss the convergence and deviation of the adaptive robust value function to the true value function.
Finally, in Section~\ref{sec:numerical} we consider an illustrative example.
Namely, the uncertain utility maximization problem where the investor needs to allocate the wealth between the money market account and the risky asset without knowing the true distribution of the risky asset's return process.
We apply the nonparametric adaptive robust control approach to such problem and provide a numerical solver by using machine learning techniques.
Numerical results presented in this section show the favorable aspects of the proposed methodology to the traditional robust control framework and the case of knowing the true model.

\section{Nonparametric Stochastic Control Problem Subject to Model Uncertainty}\label{sec:setup}

Let  $(\Omega, \sF)$ be a measurable space,  and $T\in \bN$ be a fixed time horizon. Let $\cT=\set{0,1,2,\ldots,T}$, $\cT'=\set{0,1,2,\ldots,T-1}$, and $\cT''=\set{1,2,\ldots,T}$.
On the space $(\Omega, \sF)$ we consider a controlled random process $X=\{X_t,\ t\in\cT\}$ taking values in $\bR^n$ with dynamics
\begin{align}
  X_{t+1}=S(X_t,\varphi_t,Z_{t+1}), \quad t\in\cT', \quad X_0=x_0\in\bR^n.
\end{align}
The above $Z=\{Z_t, t\in\cT\}$ is an i.i.d. real valued random sequence of which the natural filtration is denoted by $\bF=(\cF_t, t\in\cT)$. The process $\varphi=\set{\varphi_t, t\in\cT'}$ is $\bF$-adapted and takes values in a compact set $A$. The function $S:\bR^n\times A\times\bR\to\bR^n$ is deterministic and continuous. For every $t\in\cT'$, we denote by $\cA_t$ the set of all processes that take values in $A$ and are adapted to the filtration $\bF_t:=(\cF_s, t\leq s\leq T-1)$. Each element in $\cA_t$ is called an admissible control starting at time $t$, and we use the convention $\cA=\cA_0$. In this work, we assume that the process $Z$ is observable but the distribution $F^*$ of each $Z_t$ is unknown. We write $\cP(\bR)$ as the set of all distributions on $\bR$ and $\bP_F$ as the probability measure on $(\Omega,\sF)$ corresponds to $F\in\cP(\bR)$. The expectation associated to $\bP_F$ is  $\bE_F$, and $\ell:\bR^n\to\bR$ is the loss function which is continuous. In this work we will formulate and solve a robust optimization problem aiming to minimize the expected loss when taking into consideration that the true distribution $F^*$ of $Z$ is unknown.
In order to avoid model misspecification caused by assuming a wrong parametric family of distrubtions, we will conduct online learning of the underlying system in a nonparametric manner via empirical distribution. In the spirit of \cite{BCCCJ2019}, we define the sets of model candidates as approximated confidence regions around the empirical distribution. Such sets are Wasserstein balls and their sizes decreases as time goes on in general. Therefore, in our robust framework, the uncertainty is dynamically reduced through online learning and shrinkage of the Wasserstein balls.

\subsection{Empirical Distribution and Uncertainty Set}\label{sec:uncertainty_set}

We make a standing postulation that $F^*$ satisfies that
\begin{align}\label{eq:L21}
\int_{-\infty}^{\infty}\sqrt{F^*(z)(1-F^*(z))}dz<\infty.
\end{align}
We note that any distribution that has finite moments with order higher than 2 will satisfy the above assumption.
Next, denote by $\widehat{F}_t$, $t\in\cT'$, the empirical distrubtion of $Z_{t+1}$ given the initial guess $\widehat{F}_0$ of $F^*$ and the observations $Z_{1:t}:=\set{Z_i, i=1,\ldots,t}$, where $\widehat{F}_0$ is the empirical distribution of $Z_1$ based on historical data of $Z$ with sample size $t_0$. In other words, $\widehat{F}_t$ is the contructed based $Z_{-t_0+1:t}$. Defined as an average of indicator functions, $\widehat{F}_t$ satisfies the following recursion similarly to any estimated mean:
\begin{align}\label{eq:f_map}
\widehat{F}_{t+1}(z)=\frac{(t_0+t)\widehat{F}_t+\1_{\{Z_{t+1}<z\}}}{t_0+t+1}:=R(t,\widehat{F}_t,Z_{t+1}), \quad z\in\bR,\ t\in\cT'.
\end{align}
The map $R$ defined above will be viewed as the dynamics of the process $\widehat{F}$. Regarding other properties of $\widehat{F}$, 
it is well known that $\widehat{F}$ is a consistent estimator of $F^*$:
$$
\lim_{t\to\infty}\widehat{F}_t(z)=F^*(z), \quad \text{a.s..}
$$
Moreover, by the assumption \eqref{eq:L21} and using the results in \cite{BGM1999}, we have that
\begin{align}\label{eq:clt}
\sqrt{t_0+t}d_{W,1}(\widehat{F}_t,F^*)=\sqrt{t_0+t}\int_{-\infty}^\infty|\widehat{F}_t(z)-F^*(z)|dz\to\int_0^1|B(s)|dQ^*(s),
\end{align}
where $d_{W,1}$ is the Wasserstein distance of order 1, $B(s)$, $0\leq s\leq1$, and $Q^*$ are the Brownian bridge and the quantile function of $F^*$, respectively, and the convergence is in distribution. We will construct an approximated confidence region for $F^*$ based on \eqref{eq:clt}. Since $Q^*$ is unknown, we will approximate it by using $\widehat{Q}_t$ which is the quantile function corresponding to $\widehat{F}_t$. At time $t$, the integral $\int_0^1|B(s)|dQ^*(s)$ is then approximated as
\begin{align*}
\int_0^1|B(s)|dQ^*(s)\approx\sum_{i=1}^{t_0+t-1}\left|B\left(\frac{i}{t_0+t}\right)\right|\left(z_{(i+1)}-z_{(i)}\right)=:H_t(\widehat{F}_t),
\end{align*}
where $z_{(1):(t_0+t)}$ is the order statistics of $z_{-t_0+1:t}$. We define the $\alpha$-uncertainty set $\cC^{\alpha}_t$, $0<\alpha<1$, which is an approximated confidence region for $F^*$ as
\begin{align}\label{eq:uncertainty_set}
\cC^{\alpha}_t(\widehat{F}_t)=\left\{F\in\cP_1(\bR): d_{W,1}(\widehat{F}_t,F)\leq\frac{Q^H_t(1-\alpha)}{\sqrt{t_0+t}}\right\},
\end{align}
where $\cP_1(\bR)$ is the set of all distributions with finite first moment, and $Q^H_t$ is the quantile function of $H_t(\widehat{F}_t)$. Due to the discussion above, the rational behind \eqref{eq:uncertainty_set} is that the probability that $C^{\alpha}_t(\widehat{F}_t)$ contains $F^*$ is approximately $1-\alpha$. 
Note that theoretically we can derive the distribution of $\sum_{i=1}^{t_0+t-1}|B(\frac{i}{t_0+t})|(z_{(i+1)}-z_{(i)})$.
But since $B(\frac{i}{t_0+t})$, $i=1,\ldots,t_0+t-1$, are not independent, then such computation will be too tedious. 
Hence, we will estimate $Q^H_t(1-\alpha)$ via simulation instead.
As another way to justify that the radius of the Wasstertein ball being a multiple of $\frac{1}{\sqrt{t_0+t}}$ is the calculation from \cite{FG2015} where they show, under the stronger assumption
\begin{align}\label{eq:fe_condition}
\int_{\bR}e^{c_1|z|^{c_2}}F^*(dz)<\infty,
\end{align}
for some $c_1>0$, and $c_2>1$, that for any fixed $0<\alpha<1$, there exists some constants $C$ and $c$ such that 
\begin{align}\label{eq:concentration}
\bP\left(d_{W,1}(\widehat{F}_t,F^*)\leq\sqrt{\frac{\log(C/\alpha)}{c(t+t_0)}}\right)\geq 1-\alpha.
\end{align}
A different formulation of the uncertainty sets can be obtained from $\eqref{eq:concentration}$.
However, radius of the resulting Wasserstein ball has the same order, namely, $\frac{1}{\sqrt{t_0+t}}$ as of $\cC^{\alpha}_t$.

Next, by using a different representation of the Wasserstein distance between probability distributions, we have the following technical result for the map defined in \eqref{eq:f_map}.

\begin{lemma}\label{lemma:f_map_cont}
For fixed $t\in\cT'$, the mapping $R(t,\cdot,\cdot):\cP_1(\bR)\times\bR\to\cP_1(\bR)$ is continuous.
\end{lemma}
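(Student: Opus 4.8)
The plan is to exploit the fact that $R(t,\cdot,\cdot)$ is an explicit mixture map. Writing $\lambda:=\frac{t_0+t}{t_0+t+1}\in(0,1)$, formula \eqref{eq:f_map} says that $R(t,F,y)=\lambda F+(1-\lambda)\,\1_{(y,\infty)}$ as c.d.f.'s, i.e.\ $R(t,F,y)$ is the law $\lambda\,\mu_F+(1-\lambda)\,\delta_y$, where $\mu_F$ is the law with c.d.f.\ $F$ and $\delta_y$ is the Dirac mass at $y$. First I would dispatch well-posedness: since $\mu_F$ has a finite first moment and $\delta_y$ trivially does, so does the mixture, hence $R(t,\cdot,\cdot)$ indeed maps into $\cP_1(\bR)$.

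For continuity I would then use the one-dimensional representation of the order-$1$ Wasserstein distance, $d_{W,1}(\mu,\nu)=\int_{-\infty}^\infty|F_\mu(z)-F_\nu(z)|\,dz$, already invoked in \eqref{eq:clt}. Given $(F_1,y_1),(F_2,y_2)\in\cP_1(\bR)\times\bR$, subtracting the two output c.d.f.'s gives
\[
R(t,F_1,y_1)(z)-R(t,F_2,y_2)(z)=\lambda\bigl(F_1(z)-F_2(z)\bigr)+(1-\lambda)\bigl(\1_{\{y_1<z\}}-\1_{\{y_2<z\}}\bigr),
\]
so by the triangle inequality in $L^1(\bR)$,
\[
d_{W,1}\bigl(R(t,F_1,y_1),R(t,F_2,y_2)\bigr)\le\lambda\,d_{W,1}(F_1,F_2)+(1-\lambda)\int_{-\infty}^\infty\bigl|\1_{\{y_1<z\}}-\1_{\{y_2<z\}}\bigr|\,dz.
\]
The last integrand is the indicator of the bounded interval with endpoints $y_1$ and $y_2$, so the integral equals $|y_1-y_2|=d_{W,1}(\delta_{y_1},\delta_{y_2})$. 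This yields the Lipschitz bound
\[
d_{W,1}\bigl(R(t,F_1,y_1),R(t,F_2,y_2)\bigr)\le\frac{t_0+t}{t_0+t+1}\,d_{W,1}(F_1,F_2)+\frac{1}{t_0+t+1}\,|y_1-y_2|,
\]
from which joint continuity (indeed joint Lipschitz continuity with constant $1$ in the natural product metric) on $\cP_1(\bR)\times\bR$ is immediate.

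I do not expect a serious obstacle here; the only points needing a word of care are justifying the $L^1$ formula for $d_{W,1}$ on $\cP_1(\bR)$, which is classical, and observing that the exceptional contribution from the strict-versus-nonstrict inequalities in the indicators is Lebesgue-null so that the second integral is computed exactly. As an alternative one could avoid the c.d.f.\ computation and argue purely via couplings: convexity of $W_1$ under mixtures controls the first term while transporting $\delta_{y_1}$ onto $\delta_{y_2}$ controls the second. The direct $L^1$ estimate above is the cleaner route and is the one I would write up.
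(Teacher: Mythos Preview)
Your proof is correct and arrives at exactly the same Lipschitz inequality as the paper; the only difference is that you use the one-dimensional $L^1$-of-c.d.f.'s formula for $d_{W,1}$, whereas the paper uses the Kantorovich--Rubinstein dual representation $d_{W,1}(\mu,\nu)=\sup_{f\in\cM}\int f\,d\mu-\int f\,d\nu$ over $1$-Lipschitz $f$. Both routes exploit the mixture structure of $R$ in the same way and yield the identical bound $\frac{t_0+t}{t_0+t+1}\,d_{W,1}(F_1,F_2)+\frac{1}{t_0+t+1}\,|y_1-y_2|$, so this is essentially the same argument.
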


\begin{proof}
Assume that $(F_n,z_n)\to(F,z)$ where $F_n,\ F\in\cP_1(\bR)$, $z_n,\ z\in\bR$, $n=1,\ 2,\ ,\ldots$. Then, $d_{W,1}(F_n,F)\to0$ and $z_n\to z$. Denote $\mu_{F_n,z_n}=R(t,F_n,z_n)$ and $\mu_{F,z}=R(t,F,z)$. For $\cM:=\set{f:|f(x)-f(y)|\leq|x-y|}$, we have that
\begin{align*}
d_{W,1}(\mu_{F_n,z_n},\mu_{F,z})&=\sup\left\{\int_{\bR}fd\mu_{F_n,z_n}-\int_{\bR}fd\mu_{F,z}:\ f\in\cM\right\}\\
&=\sup\left\{\frac{t_0+t}{t_0+t+1}\left(\int_{\bR}fdF_n-\int_{\bR}fdF\right)+\frac{1}{t_0+t+1}(f(z_n)-f(z)):\ f\in\cM\right\}\\
&\leq \frac{t_0+t}{t_0+t+1}\sup\left\{\int_{\bR}fdF_n-\int_{\bR}fdF:\ f\in\cM\right\}+\frac{1}{t_0+t+1}|z_n-z|\\
&=\frac{t_0+t}{t_0+t+1}d_{W,1}(F_n,F)+\frac{1}{t_0+t+1}|z_n-z|.
\end{align*}
Therefore, we get that $d_{W,1}(\mu_{F_n,z_n},\mu_{F,z})\to0$ and the mapping $R(t,\cdot,\cdot)$ is continuous.
\end{proof}

One property that the set valued function $\cC^{\alpha}_t$ satisfies is upper hemicontinuity (u.h.c.). That is for any for any $F\in\cP_1(\bR)$ and any open set $E$ such that $\cC^{\alpha}_t(F)\subset E\subset\cP_1(\bR)$, there exists a neighbourhood $D$ of $F$ such that for all $F'\in D$, $\cC^{\alpha}_t(F')\subset E$ (cf. \cite[Definition 11.3]{Border1985}). To see that $\cC^{\alpha}_t$ is u.h.c., let $\varepsilon=\text{dist}(F,\partial E)-Q^H_t(1-\alpha)/\sqrt{t_0+t}$ where $\text{dist}(F,\partial E)$ is the shortest distance from $F$ to the boundary of $E$. Then, take $D$ as the ball centered at $F$ with radius $\varepsilon$. It is not hard to see that for any $F'\in D$, we have $\cC^{\alpha}_t(F')\subset E$. We summarize the result as follows
\begin{lemma}\label{lemma:uhc}
For every $t\in\cT'$, the set valued function $\cC^{\alpha}_t$ is upper hemicontinuous.
\end{lemma}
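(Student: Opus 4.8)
\emph{Proof proposal.} The plan is to verify upper hemicontinuity straight from the definition \cite[Definition 11.3]{Border1985}. Fix $t\in\cT'$. The key preliminary observation is that, by \eqref{eq:uncertainty_set}, $\cC^{\alpha}_t(F)$ is precisely the closed ball $\{G\in\cP_1(\bR):d_{W,1}(F,G)\leq r_t\}$ of radius $r_t:=Q^H_t(1-\alpha)/\sqrt{t_0+t}$ about $F$ in the metric space $(\cP_1(\bR),d_{W,1})$, and that $r_t$ is a deterministic constant once $t$ and $\alpha$ are fixed, independent of the centre $F$. Thus $\cC^{\alpha}_t$ is a constant-radius ball correspondence and the claim becomes a statement about metric balls: given $F\in\cP_1(\bR)$ and an open set $E$ with $\cC^{\alpha}_t(F)\subseteq E$, we must exhibit a neighbourhood $D$ of $F$ with $\cC^{\alpha}_t(F')\subseteq E$ for every $F'\in D$. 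If $E=\cP_1(\bR)$ any $D$ works, so assume $\partial E\neq\emptyset$.

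I would then set $\varepsilon:=\mathrm{dist}(F,\partial E)-r_t$ and $D:=\{F'\in\cP_1(\bR):d_{W,1}(F,F')<\varepsilon\}$. Granting $\varepsilon>0$ (see below), the rest is a single triangle inequality: for $F'\in D$ and any $G\in\cC^{\alpha}_t(F')$,
\[
d_{W,1}(F,G)\leq d_{W,1}(F,F')+d_{W,1}(F',G)<\varepsilon+r_t=\mathrm{dist}(F,\partial E).
\]
Because $(\cP_1(\bR),d_{W,1})$ is a geodesic space (a $1$-Wasserstein space over the real line), one has $\mathrm{dist}(F,\partial E)=\mathrm{dist}(F,E^c)$ — any minimal-length path from $F\in E$ to a point of $E^c$ must pass through $\partial E$ — so the displayed strict inequality gives $G\notin E^c$, i.e.\ $G\in E$. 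Hence $\cC^{\alpha}_t(F')\subseteq E$ for all $F'\in D$; since $F$ and $E$ were arbitrary, $\cC^{\alpha}_t$ is u.h.c.

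The step I expect to be the genuine obstacle is the strict inequality $\varepsilon>0$, i.e.\ $\mathrm{dist}(F,\partial E)>r_t$ and not merely $\geq r_t$. The weak inequality is free, since $\cC^{\alpha}_t(F)=\{G:d_{W,1}(F,G)\leq r_t\}\subseteq E$ forces $\partial E$ to be disjoint from this closed ball. Strictness would follow at once if the infimum defining $\mathrm{dist}(F,\partial E)$ were attained — for instance if closed $d_{W,1}$-balls were compact — but $(\cP_1(\bR),d_{W,1})$ is not locally compact and such balls are not compact (their members have uniformly bounded yet not uniformly integrable first moments), so a minimising sequence along $\partial E$ might escape to infinity with $d_{W,1}$-distance to $F$ tending down to exactly $r_t$. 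To remove this difficulty cleanly I would restrict the noise $Z$ to take values in a fixed compact $K\subset\bR$ — harmless in the intended applications, e.g.\ bounded returns — so that $\cC^{\alpha}_t(F)\subseteq\cP_1(K)$, which is $d_{W,1}$-compact; then the closed ball $\cC^{\alpha}_t(F)$ is compact, $\mathrm{dist}(\cC^{\alpha}_t(F),E^c)$ is attained and strictly positive, and one may simply take $D$ to be the open $d_{W,1}$-ball of that radius about $F$. Absent such a restriction, establishing $\varepsilon>0$ by controlling this possible escape of transport mass near $\partial E$ is where the real work would lie.
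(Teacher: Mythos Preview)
Your line of attack coincides with the paper's: put $\varepsilon=\mathrm{dist}(F,\partial E)-r_t$, take $D$ to be the open $\varepsilon$-ball about $F$, and finish with the triangle inequality. The paper calls this ``not hard to see'' and moves on without ever addressing the strict positivity of $\varepsilon$.

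The gap you isolate is real and, in fact, not repairable in the generality stated: the constant-radius closed-ball correspondence on $(\cP_1(\bR),d_{W,1})$ is \emph{not} upper hemicontinuous. With $r_t=1$ and centre $\delta_0$, set $H_n=(1-\tfrac1n-\tfrac1{n^2})\delta_0+(\tfrac1n+\tfrac1{n^2})\delta_n$, so $d_{W,1}(\delta_0,H_n)=1+\tfrac1n$. The set $\{H_n\}$ has no $d_{W,1}$-convergent subsequence and is therefore closed, whence $\rho(G):=\tfrac12\,\mathrm{dist}(G,\{H_n\})>0$ for every $G$ on the unit sphere $S=\{d_{W,1}(\delta_0,\cdot)=1\}$. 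The open set $E$ obtained as the union of the open unit ball about $\delta_0$ with the open $\rho(G)$-balls about all $G\in S$ then contains $\cC^\alpha_t(\delta_0)$ yet omits every $H_n$; in particular $\mathrm{dist}(\delta_0,\partial E)=1=r_t$ and your $\varepsilon$ vanishes. Worse, for any $\eta>0$ and $n$ large, $F':=(1-\tfrac{\eta}{2n})\delta_0+\tfrac{\eta}{2n}\delta_n$ satisfies $d_{W,1}(\delta_0,F')=\tfrac{\eta}{2}<\eta$ while $d_{W,1}(F',H_n)=1+\tfrac1n-\tfrac{\eta}{2}<1$, so $H_n\in\cC^\alpha_t(F')\setminus E$ and no neighbourhood of $\delta_0$ works. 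Your compact-support restriction would genuinely rescue the statement. It is also worth observing that the only downstream use of the lemma (in the proof of Theorem~\ref{thm:selector}) is to conclude that the graph $\{(F,G):d_{W,1}(F,G)\le r_t\}$ is closed, and this is immediate from the continuity of $d_{W,1}$ without any appeal to hemicontinuity.
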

As per our discussion above, the proof is straightforward and we omit it here.

\subsection{Nonparametric Adaptive Robust Control Problem}\label{sec:np_ar}

Now we proceed to formulate the nonparametric adaptive robust control problem. For the rest of the paper, we will consider $\cP_1(\bR)$ with the metric $d_{W,1}$. Since $\bR$ is separable and complete, then $(\cP_1(\bR),d_{W,1})$ is also separable and complete. Hence, $\cP_1(\bR)$ is a Polish space and thus a Borel space. Define the augumented state process $Y=\{Y_t=(X_t,\widehat{F}_t)$, $t\in\cT\}$, and the augumented state space $E_Y=\bR^n\times\cP_1(\bR)$. For $E_Y$ we equip the product topology, it is then a Borel space and the Borel $\sigma$-algebra $\cE_Y$ conicides with the product $\sigma$-algebra. The process $Y$ has the following dynamics
\begin{align}
Y_{t+1}=\mathbf{G}(t,Y_t,\varphi_t,Z_{t+1}):=(S(X_t,\varphi_t,Z_{t+1}),R(t,\widehat{F}_t,Z_{t+1})), \quad t\in\cT'.
\end{align}

According to the assumption that $S$ is continuous and Lemma~\ref{lemma:f_map_cont}, we get that $\mathbf{G}$ is continuous and therefore Borel measurable. Next, given our setup, the process $Y$ is $\bF$-adapted and Markovian.
The transition probability for the state process $Y$ is defined as follows. For any $t\in\cT'$, $(y,a)\in E_Y\times A$, and $F\in\cP_1(\bR)$, $Q_t$ is a probability measure on $\cE_Y$ such that
\begin{align*}
Q_t(D|y,a,F)=\bP_{F}(\mathbf{G}(t,y,a,Z_{t+1})\in D), \quad D\in\cE_Y.
\end{align*}

One important property of the stochastic kernel $Q_t$ is that it is in fact Borel measurable which will be proved below. Such property is crucial for showing the existence of measurable optimal controls.

\begin{proposition}\label{prop:borel}
For each $t\in\cT'$, the probability $Q_t(\ \cdot\ |y,a,F)$ is a Borel measurable stochastic kernel on $E_Y$ given $E_Y\times A\times\cP_1(\bR)$.
\end{proposition}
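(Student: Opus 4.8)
The plan is to verify, for each $t\in\cT'$, the two defining requirements of a Borel measurable stochastic kernel. First, for fixed $(y,a,F)$ the set function $D\mapsto Q_t(D\mid y,a,F)$ is the image of $\bP_F$ under the Borel map $z\mapsto\mathbf{G}(t,y,a,z)$, hence a probability measure on $\cE_Y$, and one has the representation
\begin{align*}
Q_t(D\mid y,a,F)=\int_{\bR}\1_D\big(\mathbf{G}(t,y,a,z)\big)\,F(dz),\qquad D\in\cE_Y .
\end{align*}
So the only real task is to show that for each fixed $D\in\cE_Y$ the map $(y,a,F)\mapsto Q_t(D\mid y,a,F)$ is Borel measurable on $E_Y\times A\times\cP_1(\bR)$.

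For this I would argue by a Dynkin ($\pi$--$\lambda$) reduction. Let $\cD$ be the family of $D\in\cE_Y$ for which $(y,a,F)\mapsto Q_t(D\mid y,a,F)$ is Borel measurable. Because $Q_t(\cdot\mid y,a,F)$ is a probability measure, $\cD$ contains $E_Y$, is closed under proper differences (the difference of two finite measurable functions is measurable) and under increasing unions (continuity from below of measures), so $\cD$ is a $\lambda$-system. The open subsets of $E_Y$ form a $\pi$-system generating $\cE_Y$, so it suffices to put every open $D$ into $\cD$. For open $D$ write $\1_D=\sup_m g_m$ with $g_m(w)=\min\{1,m\,\mathrm{dist}(w,D^c)\}$, an increasing sequence of bounded continuous functions on $E_Y$; by monotone convergence $Q_t(D\mid y,a,F)=\lim_m\int_{\bR}g_m(\mathbf{G}(t,y,a,z))\,F(dz)$, so it is enough to show that for every bounded continuous $g:E_Y\to\bR$ the map
\begin{align*}
\Lambda_g(y,a,F):=\int_{\bR}g\big(\mathbf{G}(t,y,a,z)\big)\,F(dz)
\end{align*}
is Borel measurable; in fact I expect $\Lambda_g$ to be continuous, which is more than enough.

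To prove continuity of $\Lambda_g$, take $(y_k,a_k,F_k)\to(y,a,F)$ in $E_Y\times A\times\cP_1(\bR)$ and decompose
\begin{align*}
\Lambda_g(y_k,a_k,F_k)-\Lambda_g(y,a,F)
&=\int_{\bR}\big[g(\mathbf{G}(t,y_k,a_k,z))-g(\mathbf{G}(t,y,a,z))\big]\,F_k(dz)\\
&\quad+\Big[\int_{\bR}g(\mathbf{G}(t,y,a,z))\,F_k(dz)-\int_{\bR}g(\mathbf{G}(t,y,a,z))\,F(dz)\Big].
\end{align*}
The bracketed difference tends to $0$, since $z\mapsto g(\mathbf{G}(t,y,a,z))$ is bounded and continuous (Lemma~\ref{lemma:f_map_cont} and continuity of $S$ make $\mathbf{G}$ continuous) and $d_{W,1}$-convergence implies weak convergence. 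For the first integral, a $d_{W,1}$-convergent sequence is weakly convergent hence uniformly tight, so given $\varepsilon>0$ there is a compact $K\subset\bR$ with $\sup_k F_k(K^c)<\varepsilon$. Since $\cK_Y:=\{y_k:k\ge 1\}\cup\{y\}$ is compact and $A$ is compact, $g\circ\mathbf{G}(t,\cdot,\cdot,\cdot)$ is uniformly continuous on $\cK_Y\times A\times K$, whence $\sup_{z\in K}\big|g(\mathbf{G}(t,y_k,a_k,z))-g(\mathbf{G}(t,y,a,z))\big|\to 0$; together with the crude bound $2\|g\|_\infty\varepsilon$ on $K^c$ and letting $\varepsilon\downarrow 0$, the first integral also tends to $0$. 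This yields continuity of $\Lambda_g$ and closes the chain back to measurability of $Q_t(D\mid\cdot)$.

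The step I expect to be the main obstacle is the first integral in the last display, i.e.\ controlling the \emph{simultaneous} perturbation of the parameter $(y,a)$ and of the integrating measure $F_k$: mere pointwise convergence of $z\mapsto g(\mathbf{G}(t,y_k,a_k,z))$ does not survive integration against the moving measures $F_k$, and since $g$ is only continuous (not Lipschitz) a Kantorovich--Rubinstein coupling estimate does not close the gap directly. The remedy is exactly the combination used above — uniform tightness of a convergent sequence in $(\cP_1(\bR),d_{W,1})$ together with uniform continuity of $g\circ\mathbf{G}$ on compacta — and this is the portion of the argument I would write out in full detail.
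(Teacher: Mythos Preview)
Your argument is correct, and it follows a genuinely different route from the paper's proof. The paper works directly with a generating $\pi$-system of \emph{closed} sets of the form $[b_1,b_2]\times D$ (rectangle in $\bR^n$ times a closed Wasserstein ball in $\cP_1(\bR)$) and shows that $(y,a,F)\mapsto Q_t([b_1,b_2]\times D\mid y,a,F)$ is \emph{upper semi-continuous}. This is done through a rather hands-on analysis of the preimage sets $C_n=\{z:\mathbf{G}(t,y_n,a_n,z)\in[b_1,b_2]\times D\}$: first establishing that $\bigcup_n C_n$ is bounded (using the dual representation of $d_{W,1}$), then a finite-cover argument producing closed neighborhoods $C_0^m\supseteq C_n$ for large $n$, and finally invoking the portmanteau $\limsup$ inequality for closed sets under weak convergence.

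Your approach instead reduces to continuity of $\Lambda_g(y,a,F)=\int g(\mathbf{G}(t,y,a,z))\,F(dz)$ for bounded continuous $g$, handled by the uniform-tightness-plus-uniform-continuity-on-compacta decomposition. This is the standard route to showing that a kernel is \emph{weakly continuous} (not just Borel measurable), so you actually obtain a stronger conclusion than the paper states, with a shorter and more transparent argument. The paper's method has the minor advantage of yielding upper semi-continuity on closed sets directly, which is one portmanteau form sometimes convenient downstream; but since the paper later (in the proof of the Bellman recursion) essentially re-derives lower semi-continuity of $(y,a,F)\mapsto\int V_{t+1}(\mathbf{G}(t,y,a,z))\,dF(z)$ anyway, your continuity result would serve equally well there.
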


\begin{proof}
According to \cite{BS1978}, it is enough to show that for any $b_1, b_2\in\bR^n$ and closed ball $D\subseteq\cP(\bR)$ with finite radius, $Q_t([b_1,b_2]\times D|y,a,F)$ is a measurable function in $(y,a,F)$, where
$$
[b_1,b_2]:=\textsf{X}_{i=1}^n[b_1^{(i)},b_2^{(i)}], \quad b_j=(b_j^{(1)},\ldots,b_j^{(n)}),\ j=1,2.
$$
We will prove that $Q_t([b_1,b_2]\times D|y,a,F)$ is upper semi-continuous, and then it will be Borel measurable.

Fix any $(y_0,a_0,F_0)\in E_Y\times A\times\cP_1(\bR)$, and let $\{(y_n,a_n,F_n),n>0\}$ be a sequence that converges to $(y_0,a_0,F_0)$. Note that the set $C_0:=\{z:\mathbf{G}(t,y_0,a_0,z)\in [b_1,b_2]\times D\}$ is a closed set since the map $\mathbf{G}$ is continuous. We similary define $C_n=\{z:\mathbf{G}(t,y_n,a_n,z)\in[b_1,b_2]\times D\}$, $n>0$, and they satisfy the same properties.

We first prove that $\bigcup_{i=0}^\infty C_i$ is bounded. Assume the union contains at least two points. If the two points $z_1<z_2$ belong to the same $C_n$, denote by $\hat{f}_n$ the second component of $y_n$, we have that
\begin{align*}
d_{W,1}(\mu_{\hat{f}_n,z_1},\mu_{\hat{f}_n,z_2})&=\sup\left\{\int_{\bR}gd\mu_{\hat{f}_n,z_1}-\int_{\bR}gd\mu_{\hat{f}_n,z_2}:g\in\cM\right\}\\
&=\sup\left\{\frac{g(z_1)-g(z_2)}{t_0+t+1}:g\in\cM\right\}\\
&\geq \frac{z_2-z_1}{t_0+t+1}.
\end{align*}
Since $D$ is a bounded set, then $z_2$ must be within a bounded range of $z_1$. Next assume that there are $z_k\in C_k$ and $z_l\in C_l$, and $z_l>z_k$. Again, we have
\begin{align*}
d_{W,1}(\mu_{\hat{f}_l,z_l},\mu_{\hat{f}_k,z_k})&=\sup\left\{\int_{\bR}gd\mu_{\hat{f}_l,z_l}-\int_{\bR}gd\mu_{\hat{f}_k,z_k}:g\in\cM\right\}\\
&\geq\frac{t_0+t}{t_0+t+1}(\bE_{\hat{f}_l}[Z_l]-\bE_{\hat{f}_k}[Z_k])+\frac{z_l-z_k}{t_0+t+1}.
\end{align*}
Since $\{y_n,n>0\}$ is a convergent sequence, then the value of the first term on the right hand side of the above inequality is bounded for any $k$ and $l$.
Moreover, the value $z_l-z_k$ should be bounded as well. Now we see that $\bigcup_{i=0}^\infty C_i$ is bounded and every single $C_n$ is compact.
Next, we show that if $C_0=\emptyset$ then for large enough $n$ the set $C_n$ is also empty.
In particular, if the preimage $R^{-1}(t,\hat{f}_0,D)=\emptyset$, then for large enough $n$, $R^{-1}(t,\hat{f}_n,D)=\emptyset$.
Otherwise, we can find a subsequence $n_k$, $k>0$, such that there exist $z_{n_k}\in R^{-1}(t,\hat{f}_0,D)$ for all $k$. Without loss of generality, we assume that $z_{n_k}$ is convergent due to the fact that $\bigcup_{i=0}^\infty C_i$ is compact. Then, $R(t,\hat{f}_{n_k},z_{n_k})$, $k>0$, is a convergent sequence and $R(t,\hat{f}_{n_k},z_{n_k})\in D$, $k>0$. Because $D$ is closed, the limit $R(t,\hat{f}_0,z_0)\in D$ which implies that $z_0\in C_1$. This contradicts to the assumption that $R^{-1}(t,\hat{f}_0,D)=\emptyset$, hence for large enough $n$, $R^{-1}(t,\hat{f}_n,D)=\emptyset$. On the other hand, by using the continuity argument, it is also easy to see that if $S(x_0,a_0,z)\notin[b_1,b_2]$ for all $z\in\bR$, then for large enough $n$, $S(x_n,a_n,z)\notin[b_1,b_2]$ for all $z\in\bR$. We have proved that if $C_0=\emptyset$ then for large enough $n$ the set $C_n$ is also empty. In this case,
$$
\lim_{n\to\infty}Q_t([b_1,b_2]\times D|y_n,a_n,F_n)=Q([b_1,b_2]\times D|t,y_0,a_0,F_0)=0,
$$
and the function $Q_t([b_1,b_2]\times D|y,a,F)$ is continuous and therefore upper semi-continuous at such $(y_0,a_0,F_0)$.

For the rest of the proof, we assume that $C_0\neq\emptyset$.
Let $\varepsilon_m>0$, $m>0$, be a strictly decreasing sequence that converges to 0. 
For any $\varepsilon_m$ and $z\in\bR$, let $\cB_m(z)$ be the open ball centered at $z$ with radius $\varepsilon_m$. The collection $\{\cB_m(z):z\in C_0\}$ is an open cover of the compact $C_0$, and there exists a finite subcover $\cB_m(z_{(1)}),\ldots,\cB_m(z_{(k_m)})$. Define the set $C_0^m=\bigcup_{i=1}^{k_m}\overline{\cB_m(z_{(i)})}$, and we argue that for any $m>0$, there exists $N_m>0$ such that for any $n>N_m$, we have $C_n\subseteq C^m_0$.

We prove the above statement by contradiction. Assume that it is not true. Then for any $N>0$, there exits $n>N$ such that $C_n\not\subseteq C^m_0$. Consequently, there exists a sub-sequence $n_j$, $j>0$, such that $z_{n_j}\in C_{n_j}$ but $z_{n_j}\notin C^m_0$.
From previous discussions we know the sequence $z_{n_j}$ is bounded, and moreover there exists a $z^*$ that is a limiting point of $z_{n_j}$. It is safe to assume
\begin{align}\label{eq:zstarnotinC}
z^*\notin C^m_0
\end{align}
for the reason that if $z^*$ is on the boundary of $C^m_0$, we can replace $\varepsilon_m$ with a number in the interval $(\varepsilon_{m+1},\varepsilon_m)$. Let us consider the sequence $\{\mathbf{G}(t,y_{n_j},a_{n_j},z_{n_j}), j>0\}$. Recall that $z_{n_j}\in C_{n_j}$, hence $\mathbf{G}(t,y_{n_j},a_{n_j},z_{n_j})\in[b_1,b_2]\times D$ for all $j>0$. Due to Lemma~\ref{lemma:f_map_cont}, $\mathbf{G}(t,y_0,a_0,z^*)$ is a limiting point of such sequence.
In addition, since $[b_1,b_2]\times D$ is a closed set, then $z^*\in C_0\subseteq C^m_0$ which contradicts to \eqref{eq:zstarnotinC}. Now we conclude that any $m>0$, there exists $N_m>0$ such that for any $n>N_m$, we have $C_n\subseteq C^m_0$.

Next, we obtain that
\begin{align}\label{eq:Qn}
Q_t([b_1,b_2]\times D|y_n,a_n,F_n)=\bP_{F_n}(C_n)\leq \bP_{F_n}(C^m_0).
\end{align}
Since $C^m_0$ is a closed set, and $F_n$ converges weakly to $F_0$, then \eqref{eq:Qn} implies that
\begin{align*}
\limsup_nQ_t([b_1,b_2]\times D|y_n,a_n,F_n)=\limsup_n\bP_{F_n}(C_n)\leq \limsup_n\bP_{F_n}(C^m_0)\leq \bP_{F_0}(C^m_0).
\end{align*}
Finally, note that one can construct $\{C^m_0,m>0\}$ such that the sequence of sets is decreasing and $\bigcap_mC^m_0=C_0$. We have
$$
\lim_{m\to\infty}\bP_{F_0}(C^m_0)=\bP_{F_0}(C_0).
$$
It follows immediately that
$$
\limsup_nQ_t(b_1,b_2]\times D|y_n,a_n,F_n)\leq Q_t([b_1,b_2]\times D|y_0,a_0,F_0).
$$
To summarize, we obtain that $Q_t([b_1,b_2]\times D|\ \cdot,\ \cdot,\ \cdot\ )$ is upper semi-continuous. Therefore, it is a Borel measurable function.
\end{proof}

In this work, we are dealing with a closed loop feedback control problem. 
To this end, a control process $\varphi$ is called Markovian if for every $t\in\cT'$ (with a slight abuse of notation)
$$
\varphi_t=\varphi_t(Y_t)
$$
where on the right hand side $\varphi_t:E_Y\to A$ is a measurable mapping. Similarly, A process $\psi$ is called a Markovian model selector if
$$
\psi_t=\psi_t(Y_t)
$$
where $\psi_t:E_Y\to\cP_1(\bR)$ is measurable. In the adaptive robust framework, we consider the Markovian control processes and Markovian model selectors such that $\psi_t(y)\in\cC^{\alpha}_t(y)$ for any $y\in E_Y$.
For every $t\in\cT'$, any time $t$ state $y_t\in E_Y$, and control process $\varphi\in\cA_t$, we denote
$$
\mathbf{\Psi}^{\varphi}_{y_t,t}=\left\{\psi_{t:T-1}, \psi_s(y_s)\in\cC^{\alpha}_s(y_s), \exists z\in\bR, \text{s.t. } y_{s+1}=\mathbf{G}(s,y_s,\varphi_s(y_s),z), t\leq s<T-1\right\}.
$$
and
$$
\mathbf{\Psi}_{y_t,t}=\left\{\psi_{t:T-1}, \psi_s(y_s)\in\cC^{\alpha}_s(y_s), \exists a\in A, z\in\bR, \text{s.t. } y_{s+1}=\mathbf{G}(s,y_s,a,z), t\leq s<T-1\right\}.
$$
Next, for every $t\in\cT'$, any $y_t\in E_Y$, $\varphi\in\cA_t$, and $\psi\in\mathbf{\Psi}_{y_t,t}$, we define the probability measure $\bQ^{\varphi,\psi}_{y_t,t}$ on the concatenated canonical space $\textsf{X}_{s=t+1}^TE_Y$ as
\begin{align*}
\bQ^{\varphi,\psi}_{y_t,t}(B_{t+1}\times\cdots\times B_T)=\int_{B_{t+1}}\cdots\int_{B_T}\prod_{u=t}^{T-1}Q_u(dy_{u+1}|y_u,\varphi_u(y_u),\psi_u(y_u)).
\end{align*}
Correspondingly, we define the family of probability measures $\cQ^\varphi_{y_t,t}=\{\bQ^{\varphi,\psi}_{y_t,t},\psi\in\mathbf{\Psi}_{y_t,t}\}$. In particular, we let $\cQ^{\varphi}_{y_0}=\cQ^{\varphi}_{y_0,0}$. Then, for given $y_0\in E_Y$, the nonparametric adaptive robust control problem is formulated as
\begin{align}\label{eq:nar}
\inf_{\varphi\in\cA}\sup_{\bQ\in\cQ^{\varphi}_{y_0}}\bE_{\bQ}[\ell(X_T)].
\end{align}
In a traditional robust setup, one would choose a fixed set $\cP_0$ in place of $\cC^\alpha_t$. Due to such reason, we will call it the static robust framework throughout. In comparison, the advantage of \eqref{eq:nar} is that such framework integrates robust control with learning and reducing uncertainty. The learning of the unknown model is carried through via the evolution of the process $Y$, and reduction of uncertainty is embedded in the construction of $\cQ^{\varphi}_{y_0}$ since for any $y_t\in E_Y$ instead of finding the worst case model in the fixed set $\cP_0$, the selectors take values in the uncertainty sets $\cC^\alpha_t(y_t)$ which is a sequence of random sets that shrink in size.

\subsection{Solution of Nonparametric Adaptive Robust Control Problem}\label{sec:solution}

We will show that solution of the nonparametric adaptive robust control problem is given by solving the following adaptive robust Bellman equations
\begin{align}\label{eq:bellman}
V_T(y) &= \ell(x), \quad y\in E_Y,\nonumber \\
V_t(y) &= \inf_{a\in A}\sup_{F\in\cC^\alpha_t(y)}\int_{E_Y}V_{t+1}(y_{t+1})Q_t(dy_{t+1}|y,a,F), \quad y\in E_Y, t\in\cT'.
\end{align}

Before we prove the main theorem in this section, let us first provide the following technical result.

\begin{lemma}\label{lemma:open}
Fix $t\in\cT'$, for any $\widehat{F}\in\cP_1(\bR)$, let
$$
\widetilde{\cC}^{\alpha}_t(\widehat{F})=\left\{F\in\cP_1(\bR):d_{W,1}(F,\widehat{F})<\frac{Q^H_t(1-\alpha)}{\sqrt{t_0+t}}\right\}.
$$
Then,
$$
\cO^\alpha_t:=\bigcup_{y\in E_Y}\widetilde{\cC}^{\alpha}_t(y)
$$
is an open set in $E_Y\times\cP_1(\bR)$.
\end{lemma}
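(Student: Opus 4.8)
The plan is to exhibit $\cO^\alpha_t$ as a strict sublevel set of a continuous function on $E_Y\times\cP_1(\bR)$ and use that such sets are open. Write a generic point as $\big((x,\widehat F),F\big)$, let $\pi\colon E_Y\to\cP_1(\bR)$, $\pi(x,\widehat F)=\widehat F$, be the projection onto the distributional coordinate, and abbreviate the radius by $r_t(\widehat F):=Q^H_t(1-\alpha)/\sqrt{t_0+t}$. By the definition of $\widetilde\cC^\alpha_t$, we have $\big((x,\widehat F),F\big)\in\cO^\alpha_t$ iff $d_{W,1}(F,\widehat F)<r_t(\widehat F)$, so that $\cO^\alpha_t=\Phi^{-1}\big((-\infty,0)\big)$ with
$$
\Phi\big((x,\widehat F),F\big):=d_{W,1}(F,\widehat F)-r_t(\widehat F).
$$
It therefore suffices to check that $\Phi$ is continuous; in fact upper semicontinuity of $\Phi$ — equivalently, continuity of $d_{W,1}(\cdot,\cdot)$ together with lower semicontinuity of $r_t$ — already yields openness of $\{\Phi<0\}$.

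The map $\big((x,\widehat F),F\big)\mapsto d_{W,1}(F,\widehat F)$ is continuous: $\pi$ is continuous, so $\big((x,\widehat F),F\big)\mapsto(F,\widehat F)$ is continuous into $\cP_1(\bR)\times\cP_1(\bR)$, and the metric $d_{W,1}$ is jointly continuous there thanks to $\big|d_{W,1}(F,\widehat F)-d_{W,1}(F',\widehat F')\big|\le d_{W,1}(F,F')+d_{W,1}(\widehat F,\widehat F')$. The remaining summand of $\Phi$ depends on the state only through $\widehat F$, so everything reduces to continuity (or just lower semicontinuity) of $\widehat F\mapsto r_t(\widehat F)$ on $(\cP_1(\bR),d_{W,1})$.

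This is the one nontrivial ingredient, and the step I expect to be the main obstacle. For fixed $t$ the sample size $m:=t_0+t$ is fixed, and by the construction in Section~\ref{sec:uncertainty_set} $r_t(\widehat F)$ equals, up to the constant $1/\sqrt m$, the $(1-\alpha)$-quantile of $H_t(\widehat F)=\sum_{i=1}^{m-1}|B(i/m)|\,\delta_i(\widehat F)$, a nonnegative linear combination of $|B(1/m)|,\dots,|B((m-1)/m)|$ — where $\big(B(1/m),\dots,B((m-1)/m)\big)$ is a nondegenerate Gaussian vector — whose coefficients $\delta_i(\widehat F)\ge0$ are the consecutive increments read off from $\widehat F$ (the order-statistic gaps in the empirical case, and, in whatever way $H_t$ is extended to all of $\cP_1(\bR)$, continuous functionals of the quantile function $\widehat Q$ of $\widehat F$). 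Since $\widehat F\mapsto\widehat Q$ is an isometry of $(\cP_1(\bR),d_{W,1})$ into $L^1[0,1]$, the coefficient vector $\widehat F\mapsto(\delta_i(\widehat F))_{i=1}^{m-1}$ is $d_{W,1}$-continuous; and for each nonzero coefficient vector the random variable $\sum_i\delta_i|B(i/m)|$ has a continuous, strictly increasing distribution function on its support, so its $(1-\alpha)$-quantile depends continuously on the coefficients. Chaining these, $\widehat F\mapsto r_t(\widehat F)$ is continuous, $\Phi$ is a difference of continuous maps, and hence $\cO^\alpha_t=\Phi^{-1}\big((-\infty,0)\big)$ is open in $E_Y\times\cP_1(\bR)$. (If one wishes to keep this last argument light: only lower semicontinuity of $r_t$ is actually used above; and if the paper's convention treats $Q^H_t(1-\alpha)$ as a constant independent of $\widehat F$, then $\Phi$ is continuous trivially from the joint continuity of $d_{W,1}$, and the lemma is immediate.)
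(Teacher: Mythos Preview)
Your argument is correct, and at its core it is the same computation as the paper's: both rely on the triangle inequality for $d_{W,1}$ to show that membership in $\cO^\alpha_t$ is stable under small perturbations of $(y,F)$. The paper simply runs this as a direct contradiction argument---take $(y_n,F_n)\to(y_0,F_0)$ with $(y_0,F_0)\in\cO^\alpha_t$, use $d_{W,1}(F_n,\hat f_n)\le d_{W,1}(F_n,F_0)+d_{W,1}(F_0,\hat f_0)+d_{W,1}(\hat f_0,\hat f_n)$, and conclude $(y_n,F_n)\in\cO^\alpha_t$ for large $n$---whereas you package the same estimate as ``$\cO^\alpha_t=\Phi^{-1}((-\infty,0))$ for $\Phi$ continuous.''

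The one substantive difference is your treatment of the radius. The paper's proof tacitly treats $Q^H_t(1-\alpha)/\sqrt{t_0+t}$ as a fixed number independent of the center $\widehat F$ (consistent with its earlier proof of upper hemicontinuity of $\cC^\alpha_t$), so the lemma reduces immediately to joint continuity of the metric---exactly your final parenthetical. Your additional effort to argue continuity of $\widehat F\mapsto r_t(\widehat F)$ is more ambitious but also somewhat speculative: the paper only defines $H_t(\widehat F)$ for empirical $\widehat F$ with $t_0+t$ atoms, and you have to guess at how it extends to all of $\cP_1(\bR)$. Under the paper's convention that extra work is unnecessary; if one did want the state-dependent-radius version, your outline (quantile map as $L^1$-isometry, then continuity of quantiles of $\sum_i\delta_i|B(i/m)|$ in the coefficients) is the right shape but would need the extension of $H_t$ pinned down before it becomes a proof.
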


\begin{proof}
We prove the statement by contradiction. Assume there exists $(y_0,F_0)\in\cO^{\alpha}_t$, and there exists a sequence $(y_n,F_n)\to(y_0,F_0)$, such that for any $n>0$, $(y_n,F_n)\notin\cO^{\alpha}_t$.
Note that $F_0\in\widetilde{\cC}^{\alpha}_t(y_0)$, hence $d_{W,1}(F_0,\hat{f}_0)<Q^H_t(1-\alpha)/\sqrt{t_0+t}-\varepsilon$ for some $\varepsilon>0$, where $\hat{f}_0$ is the second component of $y_0$. We have
\begin{align*}
d_{W,1}(F_n,\hat{f}_n)\leq d_{W,1}(F_n,F_0)+d_{W,1}(F_0,\hat{f}_0)+d_{W,1}(\hat{f}_0,\hat{f}_n).
\end{align*}
For large enough $n$, we have $d_{W,1}(F_n,F_0)<\varepsilon/4$, and $d_{W,1}(\hat{f}_0,\hat{f}_n)<\varepsilon/4$. Then, for such $n$, the following equality holds true
$$
d_{W,1}(F_n,\hat{f}_n)\leq\frac{Q^H_t(1-\alpha)}{\sqrt{t_0+t}}-\frac{\varepsilon}{2},
$$
which implies that $F_n\in\widetilde{\cC}^{\alpha}_t(y_n)$ and $(y_n,F_n)\in\cO^{\alpha}_t$. We get the contradiction so the set $\cO^{\alpha}_t$ is open.
\end{proof}

Next, we have the main result of this section which shows that the optimal control $\varphi$ and model selector $\psi$ exist, and they are sequences of measurable functions.

\begin{theorem}\label{thm:selector}
For every $t\in\cT$, the function $V_t$ is lower semicontinuous (l.s.c.) and upper semianalytic (u.s.a.). There exists Borel measurable optimal control $\varphi^*_t$, $t\in\cT'$, and universally measurable model selector $\psi^*_t$, $t\in\cT'$.
\end{theorem}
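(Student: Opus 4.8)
The plan is to prove the statement by backward induction on $t\in\cT$, from $t=T$ down to $t=0$, feeding the Bellman recursion \eqref{eq:bellman} through the measurable-selection machinery of \cite{BS1978}. For the base case $t=T$, $V_T(y)=\ell(x)$ is continuous by hypothesis, hence lower semicontinuous, and, being Borel, a fortiori upper semianalytic; there is nothing to select. For the inductive step, assume $V_{t+1}$ is l.s.c.\ and u.s.a. I would additionally use that $\ell$, and hence every $V_s$, is bounded below (this is what makes the portmanteau step below legitimate, and it holds for any loss bounded from below). It is convenient to write the one-step operator as
\[
 w_t(y,a):=\sup_{F\in\cC^\alpha_t(y)}h_t(y,a,F),\qquad h_t(y,a,F):=\int_{E_Y}V_{t+1}(y')\,Q_t(dy'\mid y,a,F),
\]
so that $V_t(y)=\inf_{a\in A}w_t(y,a)$ by \eqref{eq:bellman}.

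The first task is the joint regularity of $h_t$ on $E_Y\times A\times\cP_1(\bR)$. For lower semicontinuity I would use that $Q_t$ has the Feller property: since $Q_t(\cdot\mid y,a,F)$ is the push-forward of $F$ under the continuous map $z\mapsto\mathbf{G}(t,y,a,z)$, whenever $(y_n,a_n)\to(y,a)$ and $d_{W,1}(F_n,F)\to0$ we have $F_n$ converging weakly to $F$, and the extended continuous mapping theorem together with the joint continuity of $\mathbf{G}$ gives that $Q_t(\cdot\mid y_n,a_n,F_n)$ converges weakly to $Q_t(\cdot\mid y,a,F)$; the portmanteau theorem for l.s.c., bounded-below integrands then yields $\liminf_n h_t(y_n,a_n,F_n)\ge h_t(y,a,F)$. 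For upper semianalyticity, Proposition~\ref{prop:borel} says $Q_t$ is a Borel-measurable stochastic kernel, so, $V_{t+1}$ being u.s.a., the integral $h_t$ is u.s.a.\ in $(y,a,F)$ by the standard result on integrating upper semianalytic functions against Borel kernels \cite{BS1978}.

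Next I would push $h_t$ through the inner supremum and then the outer infimum. The graph $D_t:=\{(y,a,F):F\in\cC^\alpha_t(y)\}$ is closed --- since $d_{W,1}$ is continuous and both the center of the ball $\cC^\alpha_t(y)$ (the second coordinate of $y$) and its radius $Q^H_t(1-\alpha)/\sqrt{t_0+t}$ depend continuously on $y$ --- hence Borel, hence analytic, so $w_t$ is u.s.a.\ on $E_Y\times A$ by the analytic-set calculus of \cite{BS1978} (a projection of an analytic set is analytic). On the other hand, Lemma~\ref{lemma:open} shows that the open-ball correspondence has open graph, hence is lower hemicontinuous, and $\cC^\alpha_t$ is its pointwise closure, hence also l.h.c.; a one-sided Berge argument (l.s.c.\ objective $h_t$ over an l.h.c.\ correspondence) then shows $w_t$ is l.s.c.\ as well. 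Since $A$ is compact and $w_t$ is l.s.c., $V_t(y)=\inf_{a\in A}w_t(y,a)$ is l.s.c.\ (hence Borel, a fortiori u.s.a.), which closes the induction for $V_t$; and the classical theorem on a measurable minimizer of an l.s.c.\ function over a compact action space (\cite{BS1978}, or Brown--Purves) yields a Borel-measurable $\varphi^*_t:E_Y\to A$ with $w_t(y,\varphi^*_t(y))=V_t(y)$. For the worst-case model selector, I would apply the Jankov--von Neumann selection theorem \cite{BS1978} to the u.s.a.\ function $(y,F)\mapsto h_t(y,\varphi^*_t(y),F)$ and the analytic set $\{(y,F):F\in\cC^\alpha_t(y)\}$, obtaining --- \emph{provided the inner supremum is attained for every $y$} --- a universally measurable $\psi^*_t:E_Y\to\cP_1(\bR)$ with $\psi^*_t(y)\in\cC^\alpha_t(y)$ and $h_t(y,\varphi^*_t(y),\psi^*_t(y))=V_t(y)$.

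The main obstacle is exactly this attainment of the inner supremum, which is genuinely delicate: being a $d_{W,1}$-ball in $\cP_1(\bR)$, $\cC^\alpha_t(y)$ is \emph{not} compact in $(\cP_1(\bR),d_{W,1})$, since mass can escape to infinity along a maximizing sequence. The available leverage is that a Wasserstein ball automatically carries a uniform bound on first moments (triangle inequality against $\delta_0$), hence is tight, and it is weakly closed because $d_{W,1}$ is weakly l.s.c.; thus $\cC^\alpha_t(y)$ is compact in the topology of weak convergence. I would try to establish attainment by extracting from a maximizing sequence a weakly convergent subsequence $F_k\to F^\star\in\cC^\alpha_t(y)$ and verifying $h_t(y,\varphi^*_t(y),F^\star)=w_t(y,\varphi^*_t(y))$, exploiting that $h_t$ is affine in $F$ with the l.s.c., bounded-below integrand $z\mapsto V_{t+1}(\mathbf{G}(t,y,a,z))$; to make this go through one has to control the escape of mass, e.g.\ through a growth or integrability bound on $V_{t+1}$ inherited from $\ell$, or through the Kantorovich dual representation of $\sup_{F\in\cC^\alpha_t(y)}h_t(y,a,F)$ as a one-dimensional minimization, which is the same duality used in Section~\ref{sec:numerical} for computation. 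This is the step that most repays care; everything else is a routine application of the induction hypothesis, Proposition~\ref{prop:borel}, Lemmas~\ref{lemma:f_map_cont} and \ref{lemma:open}, and the cited selection theorems.
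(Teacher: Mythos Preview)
Your proposal is essentially the paper's own argument: backward induction feeding Proposition~\ref{prop:borel} (for u.s.a.\ of $h_t$), closedness of the graph of $\cC^\alpha_t$ (for u.s.a.\ of $w_t$), Lemma~\ref{lemma:open} (for l.s.c.\ of $w_t$ via the open-ball trick), compactness of $A$ (for l.s.c.\ of $V_t$ and a Borel $\varphi^*_t$), and weak compactness of the Wasserstein ball together with a \cite{BS1978} selection theorem (for $\psi^*_t$). The only structural difference is that the paper constructs $\psi^*_t$ as a universally measurable map on $E_Y\times A$ and composes with $\varphi^*_t$ afterwards (in Theorem~\ref{thm:solution}), whereas you compose first and select on $E_Y$; this is cosmetic.

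On the point you flag as the main obstacle---attainment of the inner supremum---the paper does exactly what you sketch and no more: it asserts that $\cC^\alpha_t(y)$ is weakly compact, extracts a weak limit of a maximizing sequence, and declares that the limit realizes the supremum, without addressing the direction-of-inequality issue you raise (portmanteau for an l.s.c.\ bounded-below integrand gives $\liminf\!\int\!g\,dF_k\ge\int\!g\,dF^\star$, which is the wrong way for a maximizing sequence). So your concern is well placed, but it is not a gap in your proposal relative to the paper; the paper's proof is silent on precisely this step. Your suggested remedies (a growth bound on $V_{t+1}\circ\mathbf{G}$ exploiting the uniform first-moment control inside the ball, or the Kantorovich dual) are the natural ways to close it.
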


\begin{proof}
Since $V_T(y)=\ell(x)$ which is continuous by assumption, then $V_T$ is l.s.c. and u.s.a.. Next, denote
\begin{align*}
v_{T-1}(y,a,F)=\int_{E_Y}V_T(y_T)Q_{T-1}(dy_T|y,a,F).
\end{align*}
By using Proposition~\ref{prop:borel}, we have that $v_{T-1}(y,a,F)$ is u.s.a.. Let $D=\bigcup_{(y,a)\in E_Y\times A}\cC^{\alpha}_{T-1}(y)$. Note that $\cC^{\alpha}_{T-1}$ is u.h.c. from Lemma~\ref{lemma:uhc} and closed valued, by adopting the proof of \cite{BCC2021} in our setup, we obtain that the graph of $\cC^{\alpha}_{t-1}$, which is $D$, is closed. Hence, the set $D$ is analytic. The $(y,a)$ section of $D$ is $\cC^{\alpha}_{T-1}(y)$, and according to \cite{BS1978}, we get that
$$
\widetilde{v}_{T-1}(y,a):=\sup_{F\in\cC^{\alpha}_{T-1}(y)}v_{T-1}(y,a,F)
$$
is u.s.c.. Moreover, for any $\varepsilon>0$, there exists an analytically measurable function $\widetilde{\psi}:E_Y\times A$ such that for any $(y,a)$,
\begin{align}\label{eq:approx_selector}
v_{T-1}(y,a,\widetilde{\psi}(y,a))\geq
\begin{cases}
\widetilde{v}_{T-1}(y,a)-\varepsilon, \quad &\text{if } \widetilde{v}_{T-1}(y,a)<\infty,\\
1/\varepsilon, \quad &\text{if } \widetilde{v}_{T-1}(y,a)=\infty.
\end{cases}
\end{align}
Define the set
$$
I = \left\{(y,a)\in E_Y\times A: \text{for some } F^*\in\cC^{\alpha}_{T-1}(y), v_{T-1}(y,a,F^*)=\widetilde{v}_{T-1}(y,a)\right\},
$$
and we claim that $I=E_Y\times A$. That is for any $(y,a)\in E_Y\times A$ there exists an $F^*$ such that $v_{T-1}(y,a,F^*)=\widetilde{v}_{T-1}(y,a)$. To see why this is true, by taking in \eqref{eq:approx_selector} $\varepsilon=1/n$, we obtain a sequence $\widetilde{\psi}_n$ such that
$$
\lim_{n\to\infty}v_{T-1}(y,a,\widetilde{\psi}_n(y,a))=\widetilde{v}_{T-1}(y,a).
$$
Next, note that $\cC^{\alpha}_{T-1}(y)$ is weakly compact, so there exists $\widetilde{\psi}^*(y,a)$ as a limiting point of $\widetilde{\psi}_n(y,a)$, $n>0$, such that $v_{T-1}(y,a,\widetilde{\psi}^*(y,a))=\widetilde{v}_{T-1}(y,a)$, and indeed $I=E_Y\times A$. Therefore, by \cite{BS1978}, there exists a universally measurable function $\psi^*_{T-1}:E_Y\times A\to\cC^{\alpha}_{T-1}(y)$ which satisfies
$$
v_{T-1}(y,a,\psi^*_{T-1}(y,a))=\widetilde{v}_{T-1}(y,a).
$$

Now we prove that the function $\widetilde{v}_{T-1}(y,a)$ is l.s.c.. To this end, we write
$$
v_{T-1}(y,a,F)=\int_{\bR}V_T(\mathbf{G}(T-1,y,a,z))dF(z).
$$
Since $V_T$ is l.s.c. and $\mathbf{G}(T-1,y,a,z)$ is continuous in $(y,a,z)$, then $V_T(\mathbf{G}(T-1,y,a,z))$ is l.s.c.. On the other hand, $F$ is clearly a continuous stochastic kernel on $\bR$ given $\cP_1(\bR)$. In view of the assumption that $\ell$ is bounded below, we know the function $v_{T-1}(y,a,F)$ is l.s.c.. Let us consider the optimization problem
$$
\widehat{v}_{T-1}(y,a)=\sup_{F\in\widetilde{\cC}^{\alpha}_t(y)}v_{T-1}(y,a,F).
$$
Lemma~\ref{lemma:open} shows that the set $\cO^{\alpha}_t$ is open in $E_Y\times\cP_1(\bR)$, so it is also open in $D':=E_Y\times A\times\cP_1(\bR)$. The $\widetilde{\cC}^{\alpha}_t(y)$ is the $(y,a)$ section of $D'$. By \cite{BS1978}, we obtain that $\widehat{v}_{T-1}(y,a)$ is l.s.c.. Note for any $y\in E_Y$, the uncertainty set $\cC^{\alpha}_t(y)$
is the closure of $\widetilde{\cC}^{\alpha}_t(y)$, it follows immediately that $\widetilde{v}_{T-1}(y,a)=\widehat{v}_{T-1}(y,a)$ and the former is therefore l.s.c..

It remains to show that
\begin{align*}
V_{T-1}(y)=\inf_{a\in A}\widetilde{v}_{T-1}(y,a)
\end{align*}
is l.s.c., and there exists a Borel measruable function $\varphi^*:E_Y\to A$ such that
\begin{align*}
V_{T-1}(y)=\widetilde{v}_{T-1}(y,\varphi^*(y)).
\end{align*}
Towards this end, we note that $D''=E_Y\times A$ is closed, and $A$ by assumption is compact. The $y$ section of $D''$ is $A$ for any $y\in E_Y$. Thus, by \cite{BS1978}, the function $V_{T-1}$ is l.s.c., and the Borel measurable optimal control $\varphi^*$ exists.

We shall prove the statement of all $t=T-2,\ldots,0$ by backward induction. Recall from Proposition~\ref{prop:borel}, the stochastic kernel $Q_{T-2}(\ \cdot\ |y,a,F)$ is Borel measurable. Also, the function $V_{T-1}$ is u.s.a.. Therefore,
$$
v_{T-2}(y,a,F)=\int_{E_Y}V_{T-1}(y_{T-1})Q_{T-2}(dy_{T-1}|y,a,F)
$$
is u.s.a.. By using a similar argument as above, the function
$$
v_{T-2}(y,a,F)=\int_{E_Y}V_{T-1}(\mathbf{G}(T-2,y,a,z))dF(z)
$$
is l.s.c.. The rest of the proof follows analogously.
\end{proof}

Finally, we show that the problem \eqref{eq:nar} will be solved by the adaptive robust Bellman equations \eqref{eq:bellman}.
To this end, we introduce the set $\cA_t=\{\varphi_{t:T-1}, t\in\cT'\}$, and provide the following technical results for preparation.

\begin{lemma}
For every $t\in\cT'$, and any $\varphi\in\cA_t$, the function
$$
\sup_{\bQ\in\cQ^{\varphi}_{y_t,t}}\bE_{\bQ}[\ell(X_T)]
$$
is upper semianalytic in $y_t$.
\end{lemma}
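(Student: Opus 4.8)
The plan is to run the backward recursion obtained from \eqref{eq:bellman} by freezing the control $\varphi$ in place of the outer infimum, to prove every stage of it is upper semianalytic, and then to identify the initial stage with the quantity in the statement. Fix $t\in\cT'$ and $\varphi\in\cA_t$; for $y=(x,\widehat F)\in E_Y$ set $g_T(y)=\ell(x)$, and for $s=T-1,\dots,t$ define
\begin{align*}
h_s(y,F)=\int_{E_Y}g_{s+1}(y')\,Q_s(dy'\mid y,\varphi_s(y),F),\qquad g_s(y)=\sup_{F\in\cC^\alpha_s(y)}h_s(y,F).
\end{align*}

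First I would show by backward induction that each $g_s$ is u.s.a.\ and bounded below. The base case is immediate since $\ell$ is continuous and bounded below. For the step, assume $g_{s+1}$ is u.s.a.\ and bounded below. Since $\varphi_s:E_Y\to A$ is Borel and $Q_s(\cdot\mid y,a,F)$ is a Borel-measurable stochastic kernel on $E_Y$ given $E_Y\times A\times\cP_1(\bR)$ by Proposition~\ref{prop:borel}, the map $(y,F)\mapsto Q_s(\cdot\mid y,\varphi_s(y),F)$ is a Borel-measurable stochastic kernel on $E_Y$ given $E_Y\times\cP_1(\bR)$; integrating the u.s.a.\ function $g_{s+1}$ against it, $h_s$ is u.s.a.\ on $E_Y\times\cP_1(\bR)$ (and bounded below) by \cite{BS1978}. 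As observed in the proof of Theorem~\ref{thm:selector}, the graph of $\cC^\alpha_s$ is closed, hence analytic, with $y$-section $\cC^\alpha_s(y)$; partial maximization of a u.s.a.\ function over the sections of an analytic set is again u.s.a.\ (\cite{BS1978}), so $g_s$ is u.s.a. This closes the induction, and in particular $g_t$ is u.s.a.

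It then remains to prove $g_t(y_t)=\sup_{\bQ\in\cQ^\varphi_{y_t,t}}\bE_\bQ[\ell(X_T)]$ for each $y_t\in E_Y$. Writing $\bQ^{\varphi,\psi}_{y_t,t}$ as the iterated product of the kernels $Q_u$, the functions $w^\psi_s(y):=\bE_{\bQ^{\varphi,\psi}_{y,s}}[\ell(X_T)]$ satisfy $w^\psi_T=\ell$ and $w^\psi_s(y)=\int_{E_Y}w^\psi_{s+1}(y')\,Q_s(dy'\mid y,\varphi_s(y),\psi_s(y))$ by the tower property. For ``$\ge$'' in the identity: for any $\psi\in\mathbf{\Psi}_{y_t,t}$ a downward induction gives $w^\psi_s\le g_s$ along the states reached from $y_t$, because $\psi_s(y)\in\cC^\alpha_s(y)$ forces $\int g_{s+1}\,Q_s(\cdot\mid y,\varphi_s(y),\psi_s(y))=h_s(y,\psi_s(y))\le g_s(y)$; taking $s=t$ and supremizing over $\psi$ gives $\sup_\bQ\bE_\bQ[\ell(X_T)]\le g_t(y_t)$. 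For the reverse bound, fix $\varepsilon>0$; applying the measurable selection theorem of \cite{BS1978} exactly as in the proof of Theorem~\ref{thm:selector} (using analyticity of the graph of $\cC^\alpha_s$ and upper semianalyticity of $h_s$) produces, for each $s$, a universally measurable selector $\psi^\varepsilon_s$ with $\psi^\varepsilon_s(y)\in\cC^\alpha_s(y)$ and $h_s(y,\psi^\varepsilon_s(y))\ge g_s(y)-\varepsilon$, with the standard modification where $g_s(y)=\infty$. The concatenation $\psi^\varepsilon$ lies in $\mathbf{\Psi}_{y_t,t}$, and a downward induction yields $w^{\psi^\varepsilon}_s\ge g_s-(T-s)\varepsilon$; hence $\bE_{\bQ^{\varphi,\psi^\varepsilon}_{y_t,t}}[\ell(X_T)]=w^{\psi^\varepsilon}_t(y_t)\ge g_t(y_t)-(T-t)\varepsilon$, and letting $\varepsilon\downarrow0$ gives $\sup_\bQ\bE_\bQ[\ell(X_T)]\ge g_t(y_t)$. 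Combining the two bounds, $\sup_{\bQ\in\cQ^\varphi_{y_t,t}}\bE_\bQ[\ell(X_T)]=g_t(y_t)$ is u.s.a.

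I expect the main obstacle to be this reverse inequality: it requires exhibiting one universally measurable model selector that is simultaneously $\varepsilon$-optimal at every state, which is where the selection theorem of \cite{BS1978} is invoked (with the analyticity of the graph of $\cC^\alpha_s$ imported from the argument in the proof of Theorem~\ref{thm:selector}), and then propagating the per-stage slack $\varepsilon$ through the $T-t$ compositions while handling the stages where $g_s$ may equal $+\infty$. By contrast, the backward-induction proof that the $g_s$ are upper semianalytic is routine once Proposition~\ref{prop:borel} and the cited results of \cite{BS1978} are available.
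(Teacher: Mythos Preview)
Your proposal is correct and is exactly what the paper has in mind: the paper omits the proof entirely, stating only that it is ``a direct modification of Theorem~\ref{thm:selector},'' and your argument is precisely that modification---freezing $\varphi$, running the backward induction to obtain upper semianalyticity via Proposition~\ref{prop:borel} and \cite{BS1978}, and then identifying the recursion with $\sup_{\bQ}\bE_\bQ[\ell(X_T)]$ through the $\varepsilon$-selector argument. If anything, you are more explicit than the paper, since the identification step you supply (your reverse inequality) is really a fixed-$\varphi$ analogue of Theorem~\ref{thm:solution} rather than of Theorem~\ref{thm:selector}, and the paper leaves this implicit.
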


The proof for this lemma is a direct modification of Theorem~\ref{thm:selector} and hence we omit it here. Such result ensures that the mentioned function is measurable and can be integrated. Now we are ready to present the solution of the adaptive robust control problem.

\begin{theorem}\label{thm:solution}
For every $t\in\cT'$, and any $y_t\in E_Y$, we have
\begin{align*}
V_t(y_t)=\inf_{\varphi\in\cA_t}\sup_{\bQ\in\cQ^{\varphi}_{y_t,t}}\bE_{\bQ}[\ell(X_T)].
\end{align*}
Moreover, with $\varphi^*_t$ and $\psi^*_t$, $t\in\cT'$, in Theorem~\ref{thm:selector}, we get
\begin{align*}
\inf_{\varphi\in\cA_t}\sup_{\bQ\in\cA^\varphi_{y_t,t}}\bE_{\bQ}[\ell(X_T)]=\bE_{\bQ^{\varphi^*_{t:T-1},\psi^*_{t:T-1}}_{y_t,t}}[\ell(X_T)].
\end{align*}
\end{theorem}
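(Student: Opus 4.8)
The plan is to prove the two claims by backward induction on $t$, running from $t=T$ down to $t=0$, and establishing at each step the verification-type identity together with attainment by $(\varphi^*,\psi^*)$. The base case $t=T$ is trivial: $V_T(y_T)=\ell(x_T)$ and there are no controls or selectors to choose, so both sides coincide. For the inductive step, fix $t\in\cT'$ and assume the statement holds at $t+1$, i.e.\ $V_{t+1}(y_{t+1})=\inf_{\varphi\in\cA_{t+1}}\sup_{\bQ\in\cQ^{\varphi}_{y_{t+1},t+1}}\bE_{\bQ}[\ell(X_T)]$ with attainment along $(\varphi^*_{t+1:T-1},\psi^*_{t+1:T-1})$.

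First I would prove the ``$\le$'' direction. Take an arbitrary $\varphi\in\cA_t$; I want to show $V_t(y_t)\le \sup_{\bQ\in\cQ^{\varphi}_{y_t,t}}\bE_{\bQ}[\ell(X_T)]$. Using the measurability guaranteed by the lemma preceding the theorem (so the map $y_{t+1}\mapsto \sup_{\bQ\in\cQ^{\varphi}_{y_{t+1},t+1}}\bE_{\bQ}[\ell(X_T)]$ is u.s.a.\ and hence integrable against $Q_t$), I decompose the right-hand side by conditioning on $Y_{t+1}$ and using the tower/concatenation structure of $\bQ^{\varphi,\psi}_{y_t,t}$ built into its definition. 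For any fixed selector $\psi_t(y_t)\in\cC^\alpha_t(y_t)$, the sup over the remaining selectors $\psi_{t+1:T-1}$ can be pushed inside the integral, giving
\begin{align*}
\sup_{\bQ\in\cQ^{\varphi}_{y_t,t}}\bE_{\bQ}[\ell(X_T)]
= \sup_{F\in\cC^\alpha_t(y_t)}\int_{E_Y}\Big(\sup_{\bQ\in\cQ^{\varphi}_{y_{t+1},t+1}}\bE_{\bQ}[\ell(X_T)]\Big)Q_t(dy_{t+1}\,|\,y_t,\varphi_t(y_t),F).
\end{align*}
By the induction hypothesis the inner sup equals $V_{t+1}(y_{t+1})$, and then the right side is $\ge \inf_{a\in A}\sup_{F\in\cC^\alpha_t(y)}\int V_{t+1}\,dQ_t = V_t(y_t)$ (using $a=\varphi_t(y_t)$). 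Taking the infimum over $\varphi$ yields ``$\le$''. The ``$\ge$'' direction and attainment go together: plug in $\varphi_t=\varphi^*_t$ and $\psi_t(\cdot)=\psi^*_t(\cdot,\varphi^*_t(\cdot))$ (composing the two-argument selector from Theorem~\ref{thm:selector} with the optimal control to get a genuine Markovian selector), so that the selector achieves the inner supremum $\widetilde v_t(y,\varphi^*_t(y)) = \sup_{F\in\cC^\alpha_t(y)} v_t(y,\varphi^*_t(y),F)$ exactly and $\varphi^*_t$ achieves the outer infimum; combined with the induction-hypothesis attainment at $t+1$ along $(\varphi^*_{t+1:T-1},\psi^*_{t+1:T-1})$, this produces a single pair $(\varphi^*_{t:T-1},\psi^*_{t:T-1})$ with $\bE_{\bQ^{\varphi^*_{t:T-1},\psi^*_{t:T-1}}_{y_t,t}}[\ell(X_T)]=V_t(y_t)$, which in particular shows $V_t(y_t)\ge\inf_\varphi\sup_\bQ\bE_\bQ[\ell(X_T)]$ and closes the induction.

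The main obstacle I anticipate is the interchange of the supremum over selectors with the $Q_t$-integral in the display above — i.e.\ justifying that
\[
\sup_{\psi_{t:T-1}}\int_{E_Y}J_{t+1}(y_{t+1};\psi_{t+1:T-1})\,Q_t(dy_{t+1}\mid y_t,\varphi_t(y_t),\psi_t(y_t))
= \sup_{F\in\cC^\alpha_t(y_t)}\int_{E_Y}\Big(\sup_{\psi_{t+1:T-1}}J_{t+1}\Big)\,Q_t(dy_{t+1}\mid y_t,\varphi_t(y_t),F),
\]
where $J_{t+1}$ denotes the conditional payoff. One direction is immediate; the nontrivial direction is a measurable-selection argument that the near-optimal continuation selectors from the $t+1$ step can be glued into a single measurable $\psi_{t+1:T-1}$ without destroying measurability in $y_{t+1}$ — this is exactly where the upper-semianalyticity and universal measurability from Theorem~\ref{thm:selector}, together with the Jankov--von Neumann selection theorem invoked via \cite{BS1978}, do the work. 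The compactness/attainment facts used there (weak compactness of $\cC^\alpha_t(y)$, compactness of $A$, closedness of the graph of $\cC^\alpha_t$) are precisely the ingredients already assembled in the proof of Theorem~\ref{thm:selector}, so the argument is a structured repetition of that machinery rather than anything genuinely new; the bookkeeping of concatenating canonical spaces across the recursion is the only place requiring care.
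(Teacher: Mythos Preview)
Your overall strategy---backward induction, using the concatenation structure of $\bQ^{\varphi,\psi}_{y_t,t}$ and invoking the selectors from Theorem~\ref{thm:selector}---matches the paper's, and your identification of the sup--integral interchange as the delicate measurable-selection point is correct (indeed more explicit than the paper, which simply asserts the decomposition). There are two issues, one minor and one genuine.

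The minor one: in your ``$\le$'' direction you write ``By the induction hypothesis the inner sup equals $V_{t+1}(y_{t+1})$.'' For a \emph{fixed} continuation control $\varphi_{t+1:T-1}$ you only have
\[
\sup_{\bQ\in\cQ^{\varphi}_{y_{t+1},t+1}}\bE_{\bQ}[\ell(X_T)]\ \ge\ V_{t+1}(y_{t+1}),
\]
since $V_{t+1}$ is the \emph{infimum} over controls of that quantity. The inequality is all you need there, so the argument survives; the paper uses exactly this inequality.

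The genuine gap is in the other direction. You correctly derive $V_t(y_t)=\bE_{\bQ^{\varphi^*_{t:T-1},\psi^*_{t:T-1}}_{y_t,t}}[\ell(X_T)]$ and then assert this ``in particular shows $V_t(y_t)\ge\inf_\varphi\sup_\bQ\bE_\bQ[\ell(X_T)]$.'' It does not: from $\bE_{\bQ^{\varphi^*,\psi^*}}\le\sup_{\bQ\in\cQ^{\varphi^*}}\bE_\bQ$ and $\inf_\varphi\sup_\bQ\le\sup_{\bQ\in\cQ^{\varphi^*}}\bE_\bQ$ the inequalities point the wrong way. What you actually need is
\[
\sup_{\bQ\in\cQ^{\varphi^*_{t:T-1}}_{y_t,t}}\bE_\bQ[\ell(X_T)]\ \le\ V_t(y_t),
\]
i.e.\ that $\varphi^*$ achieves the outer infimum in the \emph{global} robust problem, not merely in the one-step Bellman recursion. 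This can be repaired by adding precisely that statement to your induction hypothesis (then the trivial direction of the interchange together with the strengthened hypothesis at $t+1$ and the Bellman optimality of $\varphi^*_t$ gives it at $t$), but as written the implication is unjustified. The paper avoids this by proving $\inf_\varphi\sup_\bQ\le V_t$ via an $\varepsilon$-optimal continuation control $\varphi^\varepsilon_{t+1:T-1}$ satisfying $\sup_{\bQ\in\cQ^{\varphi^\varepsilon}_{y_{t+1},t+1}}\bE_\bQ[\ell(X_T)]\le V_{t+1}(y_{t+1})+\varepsilon$, which yields the bound directly using only the easy direction of the interchange; attainment by $(\varphi^*,\psi^*)$ is then verified separately at the end, by iterating the Bellman identity---exactly the identity you already proved.
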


\begin{proof}
We prove the result via backward induction in $t=T-1,\ldots,1,0$.

First, for $t=T-1$ and $y_{T-1}\in E_Y$, we have
\begin{align*}
\inf_{\varphi\in\cA_{T-1}}\sup_{\bQ\in\cQ^{\varphi}_{y_{T-1},T-1}}\bE_{\bQ}[\ell(X_T)]&=\inf_{a\in A}\sup_{F\in\cC^{\alpha}_{T-1}(y_{T-1})}\int_{E_Y}V_T(y_T)Q_{T-1}(y_T|y_{T-1},a,F)=V_{T-1}(y_{T-1}).
\end{align*}
Next, for $t=T-2,\ldots,0$ and $y_t\in E_Y$, by induction
\begin{align*}
\inf_{\varphi\in\cA_t}\sup_{\bQ\in\cQ^{\varphi}_{y_t,t}}\bE_{\bQ}[\ell(X_T)]&=\inf_{(\varphi_t,\varphi_{t+1:T-1})\in\cA_t}\sup_{F\in\cC^{\alpha}_t(y_t)}\int_{E_Y}\sup_{\bQ\in\cQ^{\varphi_{t+1:T-1}}_{y_{t+1},t+1}}\bE_{\bQ}[\ell(X_T)]Q_t(dy_{t+1}|y_t,\varphi_t(y_t),F)\\
&\geq\inf_{(\varphi_t,\varphi_{t+1:T-1})\in\cA_t}\sup_{F\in\cC^{\alpha}_t(y_t)}\int_{E_Y}V_{t+1}(y_{t+1})Q_t(dy_{t+1}|y_t,\varphi_t(y_t),F)\\
&=\inf_{a\in A}\sup_{F\in\cC^{\alpha}_t(y_t)}\int_{E_Y}V_{t+1}(y_{t+1})Q_t(dy_{t+1}|y_t,a,F)=V_t(y_t),
\end{align*}
where the inequality is due to that
$$
\sup_{\bQ\in\cQ^{\varphi_{t+1:T-1}}_{y_{t+1},t+1}}\bE_{\bQ}[\ell(X_T)]\geq\inf_{\varphi\in\cA_{t+1}}\sup_{\bQ\in\cQ^{\varphi}_{y_{t+1},t+1}}\bE_{\bQ}[\ell(X_T)]=V_{t+1}(y_{t+1}).
$$
On the other hand, for any $\varepsilon>0$, let $\varphi^{\varepsilon}_{t+1:T-1}\in\cA_{t+1}$ be an $\varepsilon$-optimal control starting at time $t+1$. We get
$$
\sup_{\bQ\in\cQ^{\varphi^{\varepsilon}_{t+1:T-1}}_{y_{t+1},t+1}}\bE_{\bQ}[\ell(X_T)]\leq\inf_{\varphi\in\cA_{t+1}}\sup_{\bQ\in\cQ^{\varphi}_{y_{t+1},t+1}}\bE_{\bQ}[\ell(X_T)]=V_{t+1}(y_{t+1})+\varepsilon.
$$
It is followed by
\begin{align*}
\inf_{\varphi\in\cA_t}\sup_{\bQ\in\cQ^{\varphi}_{y_t,t}}\bE_{\bQ}[\ell(X_T)]&=\inf_{(\varphi_t,\varphi_{t+1:T-1})\in\cA_t}\sup_{F\in\cC^{\alpha}_t(y_t)}\int_{E_Y}\sup_{\bQ\in\cQ^{\varphi_{t+1:T-1}}_{y_{t+1},t+1}}\bE_{\bQ}[\ell(X_T)]Q_t(dy_{t+1}|y_t,\varphi_t(y_t),F)\\
&\leq\inf_{(\varphi_t,\varphi_{t+1:T-1})\in\cA_t}\sup_{F\in\cC^{\alpha}_t(y_t)}\int_{E_Y}\sup_{\bQ\in\cQ^{\varphi^{\varepsilon}_{t+1:T-1}}_{y_{t+1},t+1}}\bE_{\bQ}[\ell(X_T)]Q_t(dy_{t+1}|y_t,\varphi_t(y_t),F)\\
&\leq\inf_{a\in A}\sup_{F\in\cC^{\alpha}_t(y_t)}\int_{E_Y}V_{t+1}(y_{t+1})Q_t(dy_{t+1}|y_t,a,F)+\varepsilon\\
&=V_t(y_t)+\varepsilon.
\end{align*}
Since $\varepsilon$ is arbitrary, we obtain
$$
\inf_{\varphi\in\cA_t}\sup_{\bQ\in\cQ^{\varphi}_{y_t,t}}\bE_{\bQ}[\ell(X_T)]\leq V_t(y_t).
$$
Hence, we have
$$
\inf_{\varphi\in\cA_t}\sup_{\bQ\in\cQ^{\varphi}_{y_t,t}}\bE_{\bQ}[\ell(X_T)]= V_t(y_t).
$$
To see that $\varphi^*$ and $\psi^*$ in Theorem~\ref{thm:selector} solve the adaptive robust control problem, we just need to note that for every $t\in\cT'$
\begin{align*}
V_t(y_t)&=\int_{E_Y}V_{t+1}(y_{t+1})Q_t(dy_{t+1}|y_t,\varphi^*_t(y_t),\psi^*_t(y_t))\\
&=\int_{E_Y}\int_{E_Y}V_{t+2}(y_{t+2})Q_{t+1}(dy_{t+2}|y_t,\varphi^*_t(y_{t+1}),\psi^*_{t+1}(y_{t+1}))Q_t(dy_{t+1}|y,\varphi^*_t(y_t),\psi^*_t(y_t))\\
&=\int_{E_Y}\cdots\int_{E_Y}V_T(y_T)\prod_{s=t}^{T-1}Q_{s+1}(dy_s|y_s,\varphi^*_s(y_s),\psi^*_s(y_s))\\
&=\bE_{\bQ^{\varphi^*_{t:T-1},\psi^*_{t:T-1}}_{y_t,t}}[\ell(X_T)],
\end{align*}
where the above $\psi^*_s(y_s)=\psi^*_s(y_s,\varphi^*_s(y_s))$, $t\in\cT'$, can be viewed as a composition of universally measurable functions and therefore universally measurable.
\end{proof}

\subsection{Convergence Analysis}\label{sec:convergence}

A nice property of the combination of Wasserstein metric and adaptive robust control is that convergence analysis can be done in such framework very easily. As shown in Theorem~\ref{thm:selector} and \ref{thm:solution}, to deal with \eqref{eq:nar} one employs the dynamic programming principle and solves the following Bellman equation
\begin{align*}
V_t(y)=\inf_{a\in A}\sup_{F\in\cC^\alpha_t(y)}\bE_F[V_{t+1}(\mathbf{G}(t,y,a,Z_{t+1}))], \quad t\in\cT'.
\end{align*}
According to \cite[Theorem 2]{BDOW2021}, by assuming $V$ and $S$ to be differentiable w.r.t. $x$, and denoting
$$
V^a_t(y)=\sup_{F\in\cC^\alpha_t(y)}\bE_F[V_{t+1}(\mathbf{G}(t,y,a,Z_{t+1}))],
$$
we get that
\begin{align}\label{eq:derivative}
V^a_t(y)=\bE_{\widehat{F}_t}[V_{t+1}(\mathbf{G}(t,y,a,Z_{t+1}))]+\frac{Q^H_t(1-\alpha)}{\sqrt{t_0+t}}\bE_{\widehat{F}_t}\left[\left|\frac{\partial}{\partial x}V_{t+1}(\mathbf{G}(t,y,a,Z_{t+1}))\right|\right]+o\left(\frac{1}{\sqrt{t_0+t}}\right).
\end{align}
For any given state $y=(x,\hat{f})\in E_Y$, denote by $z_{-t_0+1:0}$ the historical sample points that generate $\widehat{F}_0$, and let $z_{1:t}$ be the observations of $Z$ such that $\hat{f}(z)=\frac{\sum_{i=-t_0+1}^t\1_{\{z_i<z\}}}{t_0+t}$. The following expectation is computed as 
$$
\bE_{\widehat{F}_t}[V_{t+1}(\mathbf{G}(t,y,a,Z_{t+1}))]=\frac{1}{t_0+t}\sum_{i=-t_0+1}^tV_{t+1}(\mathbf{G}(t,y,a,z_i)),
$$
which is the sample mean of the random variable $V_{t+1}(\mathbf{G}(t,y,a,Z_{t+1}))$ given sample $z_{-t_0+1:t}$. By central limit theorem, we obtain that the convergence speed of $\bE_{\widehat{F}_t}[V_{t+1}(\mathbf{G}(t,y,a,Z_{t+1}))]$ to the expectation $\bE_{F^*}[V_{t+1}(\mathbf{G}(t,y,a,Z_{t+1}))]$ is asymptotically of order $\frac{1}{\sqrt{t_0+t}}$. Thus, as $t$ increases the adaptive robust control problem converges to the control problem without uncertainty and the convergence speed is of order $\frac{1}{\sqrt{t_0+t}}$. Moreover, we get by using the Chebyshev inequality that
\begin{align}\label{eq:chebyshev}
\bP\left(\left|\bE_{\widehat{F}_t}[V_{t+1}(\mathbf{G}(t,y,a,Z_{t+1}))]-\overline{\mu}^*_V\right|>\varepsilon\right)\leq\frac{\text{Var}\left(\bE_{\widehat{F}_t}[V_{t+1}(\mathbf{G}(t,y,a,Z_{t+1}))]\right)}{(t_0+t)\varepsilon^2},
\end{align}
where $\overline{\mu}^*_V=\bE_{F^*}[V_{t+1}(\mathbf{G}(t,y,a,Z_{t+1}))]$. Inequality \eqref{eq:chebyshev} implies that the first term on the right hand side in \eqref{eq:derivative} has a high probability of being close to $\bE_{F^*}[V_{t+1}(\mathbf{G}(t,y,a,Z_{t+1}))]$. For example, taking $\varepsilon=\frac{1}{\sqrt{t_0+t}}$, then \eqref{eq:chebyshev} implies
\begin{align*}
\bP\left(\left|\bE_{\widehat{F}_t}[V_{t+1}(\mathbf{G}(t,y,a,Z_{t+1}))]-\overline{\mu}^*_V\right|>\frac{1}{\sqrt{t_0+t}}\right)\leq\text{Var}\left(\bE_{\widehat{F}_t}[V_{t+1}(\mathbf{G}(t,y,a,Z_{t+1}))]\right).
\end{align*}
Clearly, the above probability will continue to decrease as $t$ increases.
Note that with a further assumption given in the Cramer's Theorem:
\begin{align}\label{eq:exponential}
\int_{\bR}e^{\theta z}F^*(dz)<\infty, \quad \forall \theta\in\bR,
\end{align}
we have
$$
\lim_{t\to\infty}\frac{1}{t+t_0}\log\bP\left(\left|\bE_{\widehat{F}_t}[V_{t+1}(\mathbf{G}(t,y,a,Z_{t+1}))]-\overline{\mu}^*_V\right|>\varepsilon\right)=-\sup_{\theta\in\bR}((\overline{\mu}^*_V+\varepsilon)\theta-\log\bE_{F^*}[e^{\theta Z_{t+1}}]).
$$
As a result, the probability that $\bE_{\widehat{F}_t}[V_{t+1}(\mathbf{G}(t,y,a,Z_{t+1}))]$ deviates from the true value function for more than $\varepsilon$ has an exponential decay in time with speed $-\sup_{\theta\in\bR}((\overline{\mu}^*_V+\varepsilon)\theta-\log\bE_{F^*}[e^{\theta Z_{t+1}}])$ which is an obvious improvement over \eqref{eq:chebyshev}.
In summary, if assuming \eqref{eq:exponential} and using $\cC^{\alpha}_t(y)$ as the uncertainty set, even though the overall convergence speed of $V_t$ is still of order $\frac{1}{\sqrt{t_0+t}}$, we obtain a more accurate value function compared to the true one.

Based on \eqref{eq:derivative}, we can compare the adaptive robust framework to the static robust setup in a qualitative manner.
For the latter, the uncertainty set is fixed for all $t\in\cT'$, and we denote it by $\cP_0$. To solve the static robust control problem, one also utilizes the dynamic programming principle and solves
\begin{align*}
\widetilde{V}_t(y)=\inf_{a\in A}\sup_{F\in\cP_0}\bE_F[\widetilde{V}_{t+1}(\mathbf{G}(t,y,a,Z_{t+1}))], \quad t\in\cT',
\end{align*}
where $\widetilde{V}$ is the corresponding value function.
We consider the set $\cP_0$ defined as $\cB_{\delta}(\widehat{F}_0)$ which is a Wasserstein ball arond $\widehat{F}_0$ with radius $\delta$.
We also define a preference relation between value functions via
\begin{align*}
V_t(y)\succeq\widetilde{V}_t(y) \iff \sup_{F\in\cC^\alpha_t(y)}\bE_F[V_{t+1}(\mathbf{G}(t,y,a,Z_{t+1}))] \leq \sup_{F\in\cB_\delta(\widehat{F}_0)}\bE_F[\widetilde{V}_{t+1}(\mathbf{G}(t,y,a,Z_{t+1}))],
\end{align*}
for any $a\in A$. Next, suppose that $F^*\in\cP^{\mathrm{o}}_0$ which is the interior of the set $\cP_0$. For large $T$ and $t$, we have $d_{W,1}(\widehat{F}_t,F^*)<d_{W,1}(F^*,\partial\cP_0)$ with high probability, where $d_{W,1}(F^*,\partial\cP_0)$ is the Wasserstein distance from between $F^*$ and the closest point on the boundary of $\cP_0$. Consequently, $\cC^\alpha_t(Y_t)\subset\cP_0$ with high probability, and loosely speaking we get
\begin{align}\label{eq:pref}
V_t(y)\succeq\widetilde{V}_t(y),
\end{align}
asymptotically. Note that such discussion is rather qualitative since it is not easy to compute $\bP(\cC^\alpha_t(Y_t)\subset\cP_0)$ and prove \eqref{eq:pref} rigorously. Nevertheless, we argue that adaptive robust framework is more preferrable than static robust.

For a more quantitative analysis, we assume that $\cP_0=\cC^{\alpha}_0(y_0)$. Similarly to \eqref{eq:derivative}, we have
\begin{align*}
\widetilde{V}^a_t(x)=\bE_{\widehat{F}_0}[\widetilde{V}_{t+1}(S(x,a,Z_{t+1}))]+\frac{Q^H_0(1-\alpha)}{\sqrt{t_0}}\bE_{\widehat{F}_0}\left[\left|\frac{\partial}{\partial x}\widetilde{V}_{t+1}(S(x,a,Z_{t+1}))\right|\right]+o(1).
\end{align*}
It is obvious that the right hand side of the above equality does not converge with respect to $t$. As a result, the static robust framework will produce strategies that in general distant from the optimal strategies without uncertainty. Such strategies behave very conservatively while adaptive robust has a better balance between being aggressive and conservative due to the embedded learning feature. In view of such, the adaptive robust methodology is more favorable compared to the static robust framework which offers no convergence to the true optimization problem.

Note that discussions in this section are possible since we are using the Wasserstein metric to define the uncertainty sets. Similar analysis could be done when utilizing the Kullback-Leibler divergence but stronger assumptions on the considered probability distributions are required.

\section{Nonparametric Adaptive Robust Utility Maximization}\label{sec:numerical}

In this section, we consider a utility maximization problem under model uncertainty and we will solve it under the nonparametric adaptive robust framework. To this end, we take $X$ to be the investor's wealth process. Any portfolio includes two assets: a banking account with 1-period return $1+r$, where $r$ is the interest rate and fixed throughout, and a stock with i.i.d. log-return $Z_t$, $t\in\cT''$, of which the distribution $F^*$ is unknown. For each $t\in\cT'$, denote by $\varphi_t$ the ratio of the wealth invested in the stock. We rule out leverage and short selling, so $\varphi_t$ takes values in $A=[0,1]$. Imposing the self-financing strategy, and given $X_0=x_0>0$, the dynamics of $X$ is given by
$$
X_{t+1}=X_t((1-\varphi_t)(1+r)+\varphi_te^{Z_{t+1}}), \quad t\in\cT'.
$$
Take $n=1$, and the function $S$ is defined on $\bR\times A\times \bR$. The prices of the risky asset are observable and thus the return process $Z$ of the risky asset is also observable. We will use the observations of $Z$ to construct the empirical distribution iteratively as in \eqref{eq:f_map}. Then, we build the $\alpha$-uncertainty sets for the distribution $F$ of $Z$ according to \eqref{eq:uncertainty_set}.
Next, by taking $\ell(x)=\frac{e^{-\eta x}-1}{\eta}$ for some $\eta>0$,
we formulate the nonparametric adaptive robust utility maximization problem as
$$
\inf_{\varphi\in\cA}\sup_{\bQ\in\cQ^{\alpha}_{y_0}}\bE_{\bQ}[\ell(X_T)],
$$
where $y_0=(x_0,\widehat{F}_0)$ such that $\widehat{F}_0$ is the initial guess of $F^*$. Note that the funtion $\ell$ is bounded and we are equivalently dealing with
\begin{align}\label{eq:utility}
\sup_{\varphi\in\cA}\inf_{\bQ\in\cQ^{\alpha}_{y_0}}\bE_{\bQ}\left[\frac{1-e^{-\eta X_T}}{\eta}\right]
\end{align}
which is a maximization problem of the exponential utility function. Due to Theorem~\ref{thm:solution}, we will solve the following Bellman equations to get the solution of \eqref{eq:utility}.
\begin{align}\label{eq:max}
V_T(y)&=\frac{1-e^{-\eta x}}{\eta},\nonumber\\
V_t(y)&=\sup_{a\in A}\inf_{F\in\cC^{\alpha}_t(y)}\int_{E_Y}V_{t+1}(y_{t+1})Q_t(dy_{t+1}|y,a,F), \quad t\in\cT'.
\end{align}
Moreover, by applying Theorem~\ref{thm:selector}, we get that the optimal trading strategies and worst case models exist which are optimizers of \eqref{eq:max}.

\begin{remark}
Several types of utility functions satisfy the assumptions in Theorem~\ref{thm:selector} so that the corresponding optimal trading strategies and worst case models exist, and the adaptive robust control problem can be solved by utilizing the dynamic programming principle. Another example of such utility functions is the power utility $\frac{x^{1-\eta}-1}{1-\eta}$ where $\eta>1$.
\end{remark}

Note that the loss function $\ell(x)=\frac{e^{-\eta x}-1}{\eta}$ is not only bounded from below but actually bounded. Here we provide the following technical result regarding the corresponding value functions.

\begin{proposition}\label{prop:lsc}
The value function $V_t(y)$ as in \eqref{eq:bellman} is lower semicontinuous for every $t\in\cT'$.
\end{proposition}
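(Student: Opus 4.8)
The plan is to note first that, since $\ell(x)=\frac{e^{-\eta x}-1}{\eta}$ is continuous and bounded below (by $-1/\eta$), this statement is really the lower-semicontinuity half of Theorem~\ref{thm:selector} specialized to the present loss, so in principle one can just invoke that theorem. For a self-contained argument I would run the same backward induction on $t$, retaining only what is needed for the l.s.c. claim. The base case $t=T$ is trivial, as $V_T(y)=\ell(x)$ is continuous. Assume $V_{t+1}$ is l.s.c.\ and bounded below (both properties propagate backward through the recursion).

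First I would check that $v_t(y,a,F):=\int_{E_Y}V_{t+1}(y_{t+1})\,Q_t(dy_{t+1}|y,a,F)=\int_{\bR}V_{t+1}(\mathbf{G}(t,y,a,z))\,dF(z)$ is l.s.c.\ on $E_Y\times A\times\cP_1(\bR)$: the map $\mathbf{G}(t,\cdot,\cdot,\cdot)$ is continuous (continuity of $S$ together with Lemma~\ref{lemma:f_map_cont}), so $(y,a,z)\mapsto V_{t+1}(\mathbf{G}(t,y,a,z))$ is l.s.c.; $F\mapsto F$ is a continuous stochastic kernel on $\bR$ given $\cP_1(\bR)$ since $d_{W,1}$-convergence implies weak convergence; and, the integrand being bounded below, the l.s.c.-preservation property of integration against a continuous kernel in \cite{BS1978} applies. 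Then, using the open set $\cO^\alpha_t=\bigcup_y\widetilde{\cC}^\alpha_t(y)$ from Lemma~\ref{lemma:open}, whose $(y,a)$-section is the open ball $\widetilde{\cC}^\alpha_t(y)$, the same reference yields that $\widehat v_t(y,a):=\sup_{F\in\widetilde{\cC}^\alpha_t(y)}v_t(y,a,F)$ is l.s.c. After establishing (see below) that $\widetilde v_t(y,a):=\sup_{F\in\cC^\alpha_t(y)}v_t(y,a,F)=\widehat v_t(y,a)$, so that $\widetilde v_t$ is l.s.c., one finishes by taking the partial infimum over the compact set $A$: since the $y$-section of the closed set $E_Y\times A$ is $A$, the Berge-type result in \cite{BS1978} gives that $V_t(y)=\inf_{a\in A}\widetilde v_t(y,a)$ is l.s.c., closing the induction.

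The step I expect to be the only real obstacle is the identity $\widetilde v_t=\widehat v_t$, i.e.\ that passing from the open ball to its closure in the inner supremum changes nothing; the inclusion $\widehat v_t\le\widetilde v_t$ is automatic, and the point is the reverse. Since $F\mapsto v_t(y,a,F)$ is merely lower semicontinuous in $d_{W,1}$ (an integral of a bounded-below l.s.c.\ function), not continuous, one cannot just let $F$ run to the boundary. Instead, for $F\in\cC^\alpha_t(y)$ with center $\widehat f$ (the second component of $y$) I would take the contraction $F_k:=(1-\tfrac{1}{k})F+\tfrac{1}{k}\widehat f$, which lies in $\widetilde{\cC}^\alpha_t(y)$ because $d_{W,1}(F_k,\widehat f)=(1-\tfrac{1}{k})\,d_{W,1}(F,\widehat f)<Q^H_t(1-\alpha)/\sqrt{t_0+t}$, while $d_{W,1}(F_k,F)\to0$; lower semicontinuity of the integral functional then gives $v_t(y,a,F)\le\liminf_k v_t(y,a,F_k)\le\widehat v_t(y,a)$, and taking the supremum over $F$ yields $\widetilde v_t\le\widehat v_t$. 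The remaining bookkeeping is routine, and indeed the whole proposition may alternatively be obtained as a direct corollary of Theorem~\ref{thm:selector}.
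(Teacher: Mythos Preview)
Your argument is correct for the statement as literally written, and you are right that in that reading the proposition is nothing but the l.s.c.\ half of Theorem~\ref{thm:selector}. The trouble is that the reference to \eqref{eq:bellman} is almost certainly a typo for \eqref{eq:max}: the proposition sits in Section~\ref{sec:numerical}, the paper's own proof starts from the terminal condition $V_T(y)=\frac{1-e^{-\eta x}}{\eta}$ of \eqref{eq:max} and works with $\inf_{F}$ as the inner operation, and the surrounding text (the Corollary on continuity of $V_t$ from \eqref{eq:max}, and the duality formula for $\inf_{F\in\cC^\alpha_t(y)}\int V_{t+1}\,dQ_t$ that immediately follows) only makes sense if it is the \emph{utility} value function, with the $\sup_a\inf_F$ structure, that is being shown l.s.c. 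Since $V_t^{\eqref{eq:max}}=-V_t^{\eqref{eq:bellman}}$, Theorem~\ref{thm:selector} yields only upper semicontinuity of $V_t^{\eqref{eq:max}}$; Proposition~\ref{prop:lsc} is meant to supply the other half.

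For the intended statement your scheme breaks at the inner layer: one now needs $\widetilde v_t(y,a)=\inf_{F\in\cC^\alpha_t(y)}v_t(y,a,F)$ to be l.s.c., and an infimum of l.s.c.\ functions need not be l.s.c.; the open-ball trick of Lemma~\ref{lemma:open} and your contraction $F_k=(1-\tfrac1k)F+\tfrac1k\widehat f$ go the wrong direction here. The paper's route is different. It extends $v_t$ by $+\infty$ off the closed graph $D=\bigcup_{(y,a)}\cC^\alpha_t(y)$ to get an l.s.c.\ function $\check v_t$ on all of $E_Y\times A\times\cP_1(\bR)$, so that the constrained infimum becomes $\inf_{F\in\cP_1(\bR)}\check v_t(y,a,F)$. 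Lower semicontinuity of this infimum is then established directly: for $(y_n,a_n)\to(y_0,a_0)$ one picks minimizers $F_n\in\cC^\alpha_t(y_n)$ (they exist by weak compactness of the balls), observes that $\bigcup_n\cC^\alpha_t(y_n)$ sits inside a single Wasserstein ball and is therefore weakly relatively compact, extracts a weak limit $F_0$, and uses l.s.c.\ of $\check v_t$ under weak convergence. The outer $\sup_{a\in A}$ is finally handled by the same open-set/closure device you employed, but applied to $A=[0,1]$ via its interior $(0,1)$, exploiting boundedness of $\ell$.
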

\begin{proof}
The function $V_T(y)=\frac{1-e^{-\eta x}}{\eta}$ is clearly continuous and hence u.s.c.. Because $\mathbf{G}(T-1,y,a,z)$ is continuous in $(y,a,z)$, $V_T(\mathbf{G}(T-1,y,a,z))$ is l.s.c. in $(y,a,z)$. Moreover,
$$
v_{T-1}(y,a,F)=\int_{\bR}V_T(\mathbf{G}(T-1,y,a,z))dF(z)
$$
is l.s.c. due to that $W_T$ is bounded.

Consider the set $D=\bigcup_{(y,a)\in E_Y\times A}\cC^{\alpha}_{T-1}(y)$, and define the function
$$
\check{v}_{T-1}(y,a,F)=
\begin{cases}
v_{T-1}(y,a,F) \quad &\text{if } (y,a,F)\in D,\\
\infty \quad &\text{otherwise.}
\end{cases}
$$
For any $c\in\bR$, we have
\begin{align*}
&\left\{(y,a,F)\in E_Y\times A\times\cP_1(\bR)\mid \check{v}_{T-1}(y,a,F)\leq c\right\}\\
=&\left\{(y,a,F)\in E_Y\times A\times\cP_1(\bR)\mid v_{T-1}(y,a,F)\leq c\right\}\cap D.
\end{align*}
Since $v_{T-1}$ is l.s.c., and $D$ is closed, then the set $\left\{(y,a,F)\in E_Y\times A\times\cP_1(\bR)\mid \check{v}_{T-1}(y,a,F)\leq c\right\}$ is closed and $\check{v}_{T-1}(y,a,F)$ is l.s.c.. Next, for any $c\in\bR$,
\begin{align*}
&\left\{(y,a)\in E_Y\times A\mid \inf_{F\in\cC^{\alpha}_{T-1}(y)}v_{T-1}(y,a,F)\leq c\right\}\\
=&\left\{(y,a)\in E_Y\times A\mid \inf_{F\in\cP_1(\bR)}\check{v}_{T-1}(y,a,F)\leq c\right\}.
\end{align*}
Fix $(y,a)$ and let $\{F_n,n>0\}\subset\cP_1(\bR)$ be such that
$$
\check{v}(y,a,F_n)\downarrow\widetilde{v}_{T-1}(y,a):=\inf_{F\in\cP_1(\bR)}\check{v}_{T-1}(y,a,F).
$$
By definition of $\check{v}_{T-1}$, we know for large enough $n$, $F_n\in\cC^{\alpha}_{T-1}(y)$ which is a weakly compact set. Then, there exists $F^*$ such that $\check{v}_{T-1}(y,a,F^*)=\widetilde{v}_{T-1}(y,a)$. Let $\{(y_n,a_n),n>0\}$ be a sequence that converges to some $(y_0,a_0)$. We choose a sequence $\{F_n,n>0\}\subset\cP(\bR)$ such that $\check{v}_{T-1}(y_n,a_n,F_n)=\widetilde{v}_{T-1}(y_n,a_n)$. Obviously, for each $n>0$, $F_n\in\cC^{\alpha}_{T-1}(y_n)$. Due to the fact that $\{y_n,n>0\}$ converges to $y_0$, the set $\widetilde{D}=\bigcup_n\cC^{\alpha}_{T-1}(y_n)$ is bounded. Hence, there exists $F'\in\widetilde{D}$ and $\delta>0$ such that $\widetilde{D}\subset\cB_\delta(F')$ where the latter is a Wasserstein ball around $F'$ with radius $\delta$.

Now we consider the topology consistent with the weak convergence for the argument $F$ in the function $\check{v}_{T-1}(y,a,F)$. In such case, $\check{v}_{T-1}$ is still l.s.c.. There exists a subsequence $(y_{n_k},a_{n_k},F_{n_k})$, $k>0$, such that
$$
\liminf_{n\to\infty}\check{v}_{T-1}(y_n,a_n,F_n)=\lim_{k\to\infty}\check{v}_{T-1}(y_{n_k},a_{n_k},F_{n_k}).
$$
As $\cB_{\delta}(F')$ is compact under the Prokhorov metric, there exists $F_0$ that is a limit point of $\{F_{n_k},n>0\}$. We obtain
\begin{align*}
\liminf_{n\to\infty}\widetilde{v}_{T-1}(y_n,a_n)&=\liminf_{n\to\infty}\check{v}_{T-1}(y_n,a_n,F_n)=\lim_{k\to\infty}\check{v}_{T-1}(y_{n_k},a_{n_k},F_{n_k})\\
&\geq\check{v}_{T-1}(y_0,a_0,F_0)\geq\widetilde{v}_{T-1}(y_0,a_0).
\end{align*}
This shows that $\widetilde{v}_{T-1}(y,a)$ is l.s.c.. Next, take set $\cO=E_Y\times (0,1)$ and such set is open. The $y$ section of $\cO$ is the interval $(0,1)$. By \cite{BS1978}
$$
\widehat{V}_{T-1}(y)=\sup_{a\in(0,1)}\widetilde{v}_{T-1}(y,a)
$$
is l.s.c.. Note that $A=[0,1]$ is the closure of $(0,1)$, thus
$$
V_{T-1}(y)=\sup_{a\in A}\widetilde{v}_{T-1}(y,a)=\sup_{a\in(0,1)}\widetilde{v}_{T-1}(y,a)=\widehat{V}_{T-1}(y),
$$
and $V_{T-1}(y)$ is l.s.c..
Following the backward induction for $t=T-2,\ldots,0$, the proof is complete.
\end{proof}

Proposition~\ref{prop:lsc} is of great importance for numerical computation of Bellman equations~\eqref{eq:max}. As in \cite{KENA2019}, when $V_{t+1}$ is l.s.c., for any fixed $(y,a)\in E_Y\times A$, the inner optimization problem can be solved as follows
\begin{align*}
\inf_{F\in\cC^{\alpha}_t(y)}\int_{E_Y}V_{t+1}(y_{t+1})Q_t(dy_{t+1}|y,a,F)&=\inf_{F\in\cC^{\alpha}_t(y)}\int_{\bR}V_{t+1}(\mathbf{G}(t,y,a,z))dF(z)\\
&=\sup_{\gamma\geq0}\left\{\bE_{\widehat{F}}\left[V^{\gamma}_{t+1}(\mathbf{G}(t,y,a,Z_{t+1}))\right]-\frac{\gamma Q^H_t(1-\alpha)}{\sqrt{t_0+t}}\right\},
\end{align*}
where $V^{\gamma}_{t+1}(\mathbf{G}(t,y,a,Z_{t+1}))=\inf_{z\in\bR}\left\{V_{t+1}(\mathbf{G}(t,y,a,z))+\gamma|z-Z_{t+1}|\right\}$, and $y=(x,\widehat{F})$. With such results in hand, the Bellman equation \eqref{eq:max} becomes
\begin{align}
V_T(y)&=\frac{1-e^{-\eta x}}{\eta},\nonumber\\
V_t(y)&=\sup_{a\in A,\gamma\geq0}\left\{\bE_{\widehat{F}}[V^{\gamma}_{t+1}(\mathbf{G}(t,y,a,Z_{t+1}))]-\frac{\gamma Q^H_t(1-\alpha)}{\sqrt{t_0+t}}\right\},\label{eq:max2}\\
V^{\gamma}_{t+1}(\mathbf{G}(t,y,a,Z_{t+1}))&=\inf_{z\in\bR}\{V_{t+1}(\mathbf{G}(t,y,a,z))+\gamma|z-Z_{t+1}|\}.\label{eq:max3}
\end{align}
In the sequel, we will dicuss the challenges in the numerical computation of \eqref{eq:max2} and \eqref{eq:max3} and explain our algorithm for dealing with such problem.

\subsection{Algorithm}
In this practice, we will mainly follow the idea represented in \cite{CL2019} and propose a similar numerical scheme that uses regression Monte Carlo and GP surrogates to solve the Bellman equations \eqref{eq:max2} and \eqref{eq:max3}. Then, we analyze the performance of the obtained optimal control on out-of-sample paths by simulating the realized terminal utility and estimating the expected utility.

Towards this end, we begin with discretizing the state space by choosing $y^i_t=(x^i_t,\widehat{F}^i_t)\in E_Y$, $i=1,\ldots,N$, $t\in\cT$.
These $y^i_t$'s are called design points.
Then, we solve the equation \eqref{eq:max2} for the design points $y=y^i_t$, $i=1,\ldots,N$, $t=T,T-1\ldots,0$.
One of the main tasks in the numerical algorithm is computing $\bE_{\widehat{F}}[V^{\gamma}(\mathbf{G}(t,y^i_t,a,Z_{t+1})]$ for $i=1,\ldots,N$, $t\in\cT$. 
In view of $\widehat{F}^i_t$ being an empirical distribution and assuming that
$$
\widehat{F}^i_t(z)=\frac{1}{t_0+t}\sum_{j=-t_0+1}^t\1_{z^i_j\leq z},
$$
we have
\begin{align}\label{eq:expectation}
\bE_{\widehat{F}^i_t}[V^{\gamma}_{t+1}(\mathbf{G}(t,y^i_t,a,Z_{t+1}))]=\frac{1}{t+t_0}\sum_{j=-t_0+1}^tV^{\gamma}_{t+1}(\mathbf{G}(t,y^i_t,a,z^i_j)).
\end{align}
\begin{remark}
In our current setup, $\widehat{F}_0$ is defined to be an empirical distribution constructed from historical data prior to the beginning of the investment, but it does not have to be. For example, there are estimation techniques that produce continuous prior distribution $\widehat{F}_0$ (cf. perturbed empirical distribution), and in such case Monte Carlo method will be needed to compute $\bE_{\widehat{F}}[V^{\gamma}_{t+1}(\mathbf{G}(t,y,a,Z_{t+1}))]$ due to the fact that $\widehat{F}$ is no longer a discrete distrubtion anymore.
\end{remark}
Since the value function $V_{t+1}$, and in turn $V^{\gamma}_{t+1}$, cannot be computed analytically, we will need a regression model for $V_{t+1}$ so that we can estimate the right hand side of \eqref{eq:expectation}.
The general strategy is then, for every $t\in\cT'$, we use $(y^i_{t+1},V_{t+1}(y^i_{t+1}))$, $i=1,\ldots,N$, called training points to build a regression model for $V_{t+1}$, and use it to evaluate $V^{\gamma}_{t+1}$.
Thus, we have an optimize--train--optimize loop in our algorithm.
The state component $\widehat{F}^i_t$ is a probability distribution which is infinitely dimensional, or can be equivalently replaced by the vector $z^i_{-t_0+1:t}$.
In both cases, we are dealing with a high dimensional problem and facing the challenge of ``curse of dimensionality''.
Due to such reason, the traditional grid-based method for choosing the design points $y^i_t$, $i=1,\ldots,N$, $t\in\cT$, will be inefficient.
To overcome this difficulty, we use the idea of randomized control so that we can focus on the points in the state space that are likely to be visited by the state process $Y$.
In particular, for $t\in\cT'$, given the design points $y^i_t,\ldots,y^N_t$, we will uniformly generate $a^1,\ldots,a^N$ from $A$ and use them to update $y^1_t,\ldots,y^N_t$ to $y^1_{t+1},\ldots,y^N_{t+1}$, respectively, according to
$$
y^i_{t+1}=\mathbf{G}(t,y^i_t,a^i,Z^i_{t+1}), \quad i=1,\ldots,N,
$$
where $Z^i_{t+1}$ is the simulated random noise.

Next, we discuss the choise of regression model for the value function $V_{t+1}$ in detail.
From above we see that for each $t\in\cT'$, $V_{t+1}$ can be viewed as a function of $(x_{t+1},z_{-t_0+1:t+1})$ where $z_{-t_0+1:t+1}$ yields the empirical distribution $\widehat{F}_{t+1}$.
Therefore, it is natural to regress $V_{t+1}$ against $(x_{t+1},z_{-t_0+1:t+1})$ instead of $(x_{t+1},\widehat{F}_{t+1})$.
Such treatment will reduce an infinite dimensional problem to a finite one.
However, note that $(x_{t+1},z_{-t_0+1:t+1})$ has a dimension of $t_0+t+2$ and to regress $V_{t+1}$ against such high dimensional input requires an enormous amount of training points $(x^i_{t+1},z^i_{-t_0+1:t+1})$, $i=1,\ldots,N$, so that we can obtain an accurate regression model for $V_{t+1}$.
Hence, solely for the regression purpose, we will approximate $\widehat{F}_{t+1}$ with its first $d$ moments denote by $m^1_{t+1},\ldots,m^d_{t+1}$, and regress $V_{t+1}$ against $(x_{t+1},m^1_{t+1},\ldots,m^d_{t+1})$.
By doing so, we effectively approximate a $t_0+t+2$-dimensional function with a $d+1$-dimensional regression model.
Since the moments of a distrubtion capture the features of the distribution quite well, our strategy is a sound way to reduce the dimension of the problem that we are facing.
To this end, we propose to use the GP surrogate to build regression models for $V_{t+1}$, $t\in\cT'$.
Gaussian process is a popular tool in machine learning that is suitable for dealing with regression problem with mid-range dimensions.
It produces nonparametric functional approximations of functions by utilizing the location information of the function input. Namely, for some ``usual'' function $g$, if $\|u_1-u_2\|$ is small, then a GP user assumes that $\|g(u_1)-g(u_2)\|$ should be relatively small as well. Recall that from Theorem~\ref{thm:selector} and Proposition~\ref{prop:lsc}, we immediately get the following result.
\begin{corollary}
For every $t\in\cT$, and $V_t$ defined in \eqref{eq:max}, $V_t$ is a continuous function on $E_Y$.
\end{corollary}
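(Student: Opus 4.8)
The plan is to obtain continuity of $V_t$ by pairing the lower semicontinuity already supplied by Proposition~\ref{prop:lsc} with an upper semicontinuity statement extracted from Theorem~\ref{thm:selector} via a change of sign. The observation behind this is that the utility-maximization value functions of \eqref{eq:max} are, up to sign, the loss-minimization value functions of \eqref{eq:bellman} for the loss $\ell(x)=\frac{e^{-\eta x}-1}{\eta}$.

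First I would record the identity $V_t=-\widetilde{V}_t$ for every $t\in\cT$, where $\widetilde{V}_t$ is the value function defined by the Bellman recursion \eqref{eq:bellman} with terminal condition $\widetilde{V}_T(y)=\ell(x)$. This is a short backward induction: at $t=T$, $V_T(y)=\frac{1-e^{-\eta x}}{\eta}=-\ell(x)=-\widetilde{V}_T(y)$; and the inductive step uses only linearity of the integral, $\int_{E_Y}(-g)(y')\,Q_t(dy'|y,a,F)=-\int_{E_Y}g(y')\,Q_t(dy'|y,a,F)$, together with $\sup_{a}\inf_{F}(-h)=-\inf_{a}\sup_{F}h$; these turn the recursion \eqref{eq:max} defining $V_t$ into the negative of the recursion \eqref{eq:bellman} defining $\widetilde{V}_t$.

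Next I would apply Theorem~\ref{thm:selector} to the loss-minimization problem \eqref{eq:bellman} with $\ell(x)=\frac{e^{-\eta x}-1}{\eta}$. Since this $\ell$ is continuous and bounded below (indeed $\ell\ge-1/\eta$), the hypotheses of that theorem are met, so each $\widetilde{V}_t$ is lower semicontinuous on $E_Y$; by the identity of the previous paragraph, $V_t=-\widetilde{V}_t$ is therefore upper semicontinuous. On the other hand, Proposition~\ref{prop:lsc} gives that $V_t$ is lower semicontinuous for $t\in\cT'$, and $V_T$ is continuous outright. A real-valued function that is simultaneously lower and upper semicontinuous is continuous, so $V_t$ is continuous on $E_Y$ for every $t\in\cT$, which is the claim.

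I do not anticipate a real obstacle: the corollary is essentially a bookkeeping combination of the two preceding results. The only two points that deserve a line of care are (i) checking that $\ell(x)=\frac{e^{-\eta x}-1}{\eta}$ meets the hypotheses of Theorem~\ref{thm:selector} (continuity and boundedness below), and (ii) making the sign duality $V_t=-\widetilde{V}_t$ explicit so that ``$\widetilde{V}_t$ is l.s.c.'' can be read as ``$V_t$ is u.s.c.''. Everything substantive---the semicontinuity and measurability of the $\inf\sup$ and $\sup\inf$ value functions over the Wasserstein uncertainty sets---is already in hand.
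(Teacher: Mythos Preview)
Your proposal is correct and is exactly the argument the paper intends: the text immediately preceding the corollary says ``Recall that from Theorem~\ref{thm:selector} and Proposition~\ref{prop:lsc}, we immediately get the following result,'' and your write-up simply makes explicit the sign duality $V_t=-\widetilde V_t$ that allows Theorem~\ref{thm:selector} (l.s.c.\ for the $\inf\sup$ recursion with $\ell(x)=\frac{e^{-\eta x}-1}{\eta}$) to be read as u.s.c.\ for the $\sup\inf$ recursion \eqref{eq:max}, which is then combined with Proposition~\ref{prop:lsc}. Nothing is missing; your hypothesis check that $\ell$ is continuous and bounded below by $-1/\eta$ is the only point the paper leaves to the reader.
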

Hence, GP is the ideal tool for us to build the statistical surrogates for each $V_t$, $t\in\cT''$, so that we can proceed with the backward iteration and solve the Bellman equations.
To be more specific, we approximate each of the design points $y^i_{t+1}$, $i=1,\ldots,N$, by $\check{y}^i_{t+1}:=(x^i_{t+1},m^{i,1}_{t+1},\ldots,m^{i,d}_{t+1})$, and denote by $\check{V}_{t+1}$ the GP surrogate of $V_{t+1}$.
Then, in the context of GP regression, the values $\check{V}_{t+1}(\check{y}^i_{t+1})$, $i=1,\ldots,N$, are jointly normal distributed.
For any $y\in E_Y$, the predicted value $\check{V}_{t+1}(y)$ that approximates $V_{t+1}(y)$ is then computed as
$$
\check{V}_{t+1}(y)=(k(y,\check{y}^1_{t+1}),\ldots,k(y,\check{y}^N_{t+1}))[\mathbf{K}+\epsilon^2\mathbf{I}]^{-1}(V_{t+1}(\check{y}^1_{t+1}),\ldots,V_{t+1}(\check{y}^N_{t+1}))^\top,
$$
where $\mathbf{I}$ is the $N\times N$ identity matrix and entries of $\mathbf{K}$ has the form $\mathbf{K}_{ij}=k(\check{y}^i_{t+1},\check{y}^j_{t+1})$, $i,j=1,\ldots,N$. The function $k(\cdot,\cdot)$ is called the kernel function of the GP surrogate and in this project, we choose it from the Matern-5/2 family (cf. \cite{Genton2002}).
We fit $\check{V}_{t+1}$ to the training points $\{(\check{y}^i_{t+1},V_{t+1}(\check{y}^i_{t+1})),i=1,\ldots,N\}$ and during this process the hyperparameters inside of $k(\cdot,\cdot)$ will be estimated.
For a comprehensive discussion of the Gaussian process surrogates, we refer to the book \cite{RW2006}.

We summarize our algorithm for solving \eqref{eq:max2} and \eqref{eq:max3} as follows:

\begin{enumerate}
\item
(Assume that $V_{t+1}(\cdot)$ and $\varphi^*_{t+1}(\cdot)$ are computed (estimated) at design points $y^1_{t+1},\ldots,y^N_{t+1}$, $t\in\cT''$, and the GP surrogates $\check{V}_{t+1}$ and $\check{\varphi}^*_{t+1}$ \footnote{The GP surrogate $\check{\varphi}^*_{t+1}$ is the Gaussian process regression model constructed by using the training data $\{(\check{y}^i,\varphi^*_{t+1}(y^i)),i=1,\ldots,N\}$.} are fitted.)
\item
For time $t$, any $a\in A$, $\gamma>0$, $z\in\bR$, and each of the design points $\{y^i_t,i=1,\ldots,N\}\subset E_Y$, use the GP surrogate $\check{V}_{t+1}$ and command \verb|scipy.optimize.minimize_scalar| in the scipy package for \verb|Python| to compute
\begin{align*}
\check{V}^{\gamma}_{t+1}(\mathbf{G}(t,y^i_t,a,z)):=\inf_{z'\in\bR}\{\check{V}_{t+1}(\mathbf{G}(t,y^i_t,a,z))+\gamma|z'-z|)\}, \quad i=1,\ldots,N,
\end{align*}
and $\check{V}^{\gamma}_{t+1}$ is an approximation of $V^{\gamma}_{t+1}$.
\item
For time $t$, any $a\in A$, and each of the design points $\{y^i_t,i=1,\ldots,N\}\subset E_Y$, approximate $\bE_{\widehat{F}}[V^{\gamma}_{t+1}(\mathbf{G}(t,y^i_t,a,Z_{t+1}))]$ as
$$
\bE_{\widehat{F}}[V^{\gamma}_{t+1}(\mathbf{G}(t,y^i_t,a,Z_{t+1}))]\approx\frac{1}{t+t_0}\sum_{j=-t_0+1}^t\check{V}^{\gamma}_{t+1}(\mathbf{G}(t,y^i_t,a,z^i_j)).
$$
\item
Use the command \verb|scipy.optimize.minimize_scalar| to compute
$$
V^{(1)}(y^i_t,a)=-\inf_{\gamma\geq0}\left\{-\frac{1}{t+t_0}\sum_{j=-t_0+1}^t\check{V}^{\gamma}_{t+1}(\mathbf{G}(t,y^i_t,a,z^i_j))+\frac{\gamma Q^H_t(1-\alpha)}{\sqrt{t_0+t}}\right\}, \quad i=1,\ldots,N,
$$
and
$$
V_t(y^i_t)=-\inf_{a\in A}(-V^{(1)}(y^i_t,a)),
$$
where we also obtain the optimizer $\varphi^*_t(y^i_t)$, $i=1,\ldots,N$.
\item
Fit the GP surrogate $\check{V}_t$ by using $(\check{y}^i_t,V_t(y^i_t))$, $i=1,\ldots,N$, as the training points. Similarly, fit $\check{\varphi}^*_t$ by using $(\check{y}^i_t,\varphi_t(y^i_t))$, $i=1,\ldots,N$.
\item
Goto 1.: start the next recursion for $t-1$.
\end{enumerate}

To analyze the performance of the optimal control we obtain from solving the Bellman equations, we generate $N'$ forward simulated paths by starting with the initial state $y_0=(x_0,\widehat{F}_0)$ and applying the control $\check{\varphi}^*_t(\check{y}^i_t)$, $i=1,\ldots,N'$, to obtain the next-step state $y^i_{t+1}$ for $t\in\cT'$ according to
$$
y^i_{t+1}=\mathbf{G}(t,y^i_t,\check{\varphi}^*_t(\check{y}^i_t),Z^i_{t+1}).
$$
The corresponding forward Monte Carlo algorithm is summarized as
\begin{enumerate}
\item
Take $y^i_0\equiv(x_0,\widehat{F}_0)$, $i=1,\ldots,N'$.
\item
For $t=1,\ldots,T$, generate $Z^i_t$, $i=1,\ldots,N'$.
\item
Approximate $y^i_t$ as $\check{y}^i_t$ and use the GP surrogates to compute the control $a^i_t=\check{\varphi}_t(\check{y}^i_t)$, $i=1,\ldots,N'$, $t\in\cT'$.
\item
Update the states $y^i_{t+1}=\mathbf{G}(t,y^i_t,a^i_t,Z^i_{t+1})$, $i=1,\ldots,N'$, $t=0,\ldots,T-1$.
\item
Compute $\widehat{V}_0(y_0)=\frac{1}{N'}\sum_{i=1}^{N'}\frac{1-e^{-\eta x^i_T}}{\eta}$.
\end{enumerate}

The average $\widehat{V}_0(y_0)$ is then the Monte Carlo estimator of the expected utility. In addition, we are interested in the distribution of the utility
$$
\widehat{U}(y_0)=\left(\frac{1-e^{-\eta x^1_T}}{\eta},\ldots,\frac{1-e^{-\eta x^{N'}_T}}{\eta}\right)
$$
and the numerical results will be reported in the sequel.

\begin{figure}[h!]
   \centering

     \includegraphics[width=0.49\textwidth]{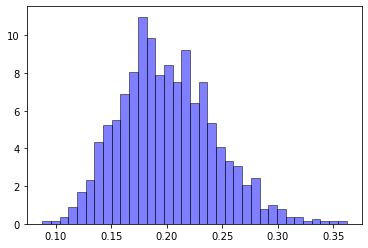}
     \includegraphics[width=0.49\textwidth]{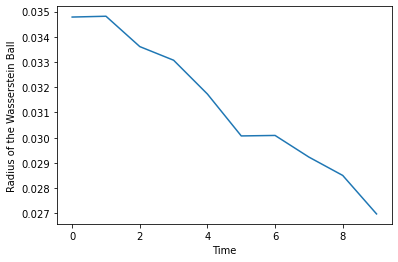}

     \caption{Left panel: Histogram of simulated $Q^H_0(1-\alpha)$ for $t_0=20$ and $\alpha=0.1$. Right panel: simulated path of the radius of $\cC^{\alpha}_t$.}

     \label{fig:plot1}
\end{figure}

\subsection{Numerical Results}

In this section, we apply the machine learning algorithm described above to some specific sets of parameters. We will compare the performance of nonparametric adaptive robust method to that of some other frameworks used to deal with model uncertainty. Theoretically, the optimal control is attained when there is no model uncertainty. We will also analyze the difference in performance between the cases of knowing the true model and having to estimate and learning the dynamics of the underlying stochastic process. To this end, we consider three types of investors: the one who knows the true model with terminal utility $\widehat{U}^{\text{TR}}(y_0)$ and expected utility $\widehat{V}_0^{\text{TR}}(y_0)$; the one that applies the nonparametric adaptive robust with terminal utility $\widehat{U}^{\text{AR}}(y_0)$ and expected utility $\widehat{V}_0^{\text{AR}}(y_0)$; finally, the one uses the static robust methods, meaning the corresponding uncertainty sets do not change with respect to the state and time. In particular, the static robust investor utilizes the nonparametric setup and builds the uncertainty set as a Wasserstein ball around the empirical distribution generated by historical data with sample size $t_0$. The terminal utility and expected terminal utility of the nonparametic static robust investor are $\widehat{U}^{\text{SR}}(y_0)$ and $\widehat{V}_0^{\text{SR}}(y_0)$, respectively.

\begin{table}[h!]
\centering
\renewcommand{\arraystretch}{1.3}
\begin{tabular}{c|ccc}
\hline
 \multicolumn{1}{c}{ }& \multicolumn{1}{||c}{AR} & \multicolumn{1}{|c}{TR} & \multicolumn{1}{|c}{SR} \\
  \hline
   $\widehat{V}_0$  & \multicolumn{1}{||c}{65.425570} & \multicolumn{1}{|c}{66.805075} & \multicolumn{1}{|c}{63.947066}\\
 var($\widehat{U}$)   & \multicolumn{1}{||c}{36.679199} & \multicolumn{1}{|c}{108.601415} & \multicolumn{1}{|c}{$6.451175\cdot 10^{-9}$}\\
 $q_{0.20}(\widehat{U})$  & \multicolumn{1}{||c}{59.682528} & \multicolumn{1}{|c}{58.740896} & \multicolumn{1}{|c}{63.947003}\\
 $q_{0.90}(\widehat{U})$  & \multicolumn{1}{||c}{72.937869} & \multicolumn{1}{|c}{78.899913} & \multicolumn{1}{|c}{63.947173}\\
 $\text{max}(\widehat{U})$  & \multicolumn{1}{||c}{82.953448} & \multicolumn{1}{|c}{90.773115} & \multicolumn{1}{|c}{63.947302}\\
 $\text{min}(\widehat{U})$  & \multicolumn{1}{||c}{46.192910} & \multicolumn{1}{|c}{26.307049} & \multicolumn{1}{|c}{63.946811}\\
 \hline
\end{tabular}
\bigskip
\caption{Mean, variance, 20\%-quantile, 90\%-quantile, maximum, and minimum of the out-of-sample terminal utility for the AR, TR and SR methods; $Q^H_0(1-\alpha)=0.199165$.}
\label{table:table1}
\end{table}

Note that we can easily modify the above algorithm to compute $\widehat{U}^{\text{TR}}$, $\widehat{V}^{\text{TR}}_0$, $\widehat{U}^{\text{SR}}$, and $\widehat{V}^{\text{SR}}_0$. In fact, by taking $\cC^{\alpha}_t(y)\equiv\cB_0(F^*)$ which is the Wasserstein ball around $F^*$ with 0 radius, we are able to compute $\widehat{U}^{\text{TR}}$ and $\widehat{V}^{\text{TR}}_0$. For $\widehat{U}^{\text{SR}}$ and $\widehat{V}^{\text{SR}}_0$, we take $\cC^{\alpha}_t(y)\equiv\cC^{\alpha}_0(y_0)$.

We choose the terminal time to be 1 year with $T=10$ time steps which means one unit of time is 0.1 year. The annual insterest rate is 0.02 so that $r=0.02/10=0.002$. Initial endowment is $x_0=100$. Some other parameters are $\alpha=0.1$, $\eta=0.01$, and $m=4$. The number of paths is $N=1000$ for nonparametric adaptive robust and 200 for other methods. The reason for such choice is that the state space of adaptive robust has dimension $m+1$ while the others have dimension 1.
For the sampling measure and test measure, we consider a Gaussian mixture model: with 40\% probability, $Z_t\sim N(0.06/10,0.4^2/10)$, and with 60\% probability, $Z_t\sim N(0.16/10,0.25^2/10)$. Recall that the parametric static robust investor assumes that $Z_t\sim N(\mu,\sigma^2)$ and constructs the confidence region for $\mu$ and $\sigma^2$. We will compute and compare the distributions of utlities among the mentioned four frameworks with the above choice of parameters for $t_0=20$.
We also want to point out that the behavior of the optimal strategies would depend on the simulated $Q^H_0(1-\alpha)$.
In this exercise, we present two cases with $Q^H_0(1-\alpha)\approx0.199165$ and $Q^H_0(1-\alpha)\approx0.092942$.
Note that among 1000 simulated paths, 0.199165 sits very closely to the average value of $Q^H_0(1-\alpha)$ which is 0.200395, and $0.092942$ is below the 1\% quantile which is 0.115721.
We refer to the left panel of Figure~\ref{fig:plot1} for the histogram of simulated $Q^H_0(1-\alpha)$.

\begin{table}[h!]
\centering
\renewcommand{\arraystretch}{1.3}
\begin{tabular}{c|ccc}
\hline
 \multicolumn{1}{c}{ }& \multicolumn{1}{||c}{AR} & \multicolumn{1}{|c}{TR} & \multicolumn{1}{|c}{SR} \\
  \hline
   $\widehat{V}_0$  & \multicolumn{1}{||c}{65.440839} & \multicolumn{1}{|c}{66.805075} & \multicolumn{1}{|c}{63.947067}\\
 var($\widehat{U}$)   & \multicolumn{1}{||c}{41.907675} & \multicolumn{1}{|c}{108.601415} & \multicolumn{1}{|c}{$6.776359\cdot 10^{-9}$}\\
 $q_{0.20}(\widehat{U})$  & \multicolumn{1}{||c}{59.575356} & \multicolumn{1}{|c}{58.740896} & \multicolumn{1}{|c}{63.946997}\\
 $q_{0.90}(\widehat{U})$  & \multicolumn{1}{||c}{73.363772} & \multicolumn{1}{|c}{78.899913} & \multicolumn{1}{|c}{63.947175}\\
 $\text{max}(\widehat{U})$  & \multicolumn{1}{||c}{85.322523} & \multicolumn{1}{|c}{90.773115} & \multicolumn{1}{|c}{63.947367}\\
 $\text{min}(\widehat{U})$  & \multicolumn{1}{||c}{45.40310} & \multicolumn{1}{|c}{26.307049} & \multicolumn{1}{|c}{63.946763}\\
 \hline
\end{tabular}
\bigskip
\caption{Mean, variance, 20\%-quantile, 90\%-quantile, maximum, and minimum of the out-of-sample terminal utility for the AR, TR and SR methods; $Q^H_0(1-\alpha)=0.092942$.}
\label{table:table2}
\end{table}

For $Q^H_0(1-\alpha)\approx0.199165$, comparison among AR, TR, and SR are reported in Table~\ref{table:table1}. Since TR knows the true model of the risky asset return, the corresponding strategy will be optimal and $\widehat{V}^{TR}_0$ will outperform any other optimal control provided by investors who do not know the true model. Nevertheless, AR does better in three indices of risky management: AR has lower variance, higher 20\% quantile, and minimum value of the simulated terminal utilities than TR. AR also beats SR quite significantly in regard to the mean, 90\% quantile and maximum value of the simulated terminal utility. In addition, by viewing the Figure~\ref{fig:histogram1}, we argue that AR produces wealth paths with more favorable distribution than TR. On the other hand, SR generates trivial optimal strategies similarly to the observations made in some earlier work (cf. \cite{BCCCJ2019}, \cite{CM2020}). By ignoring the numerical instability, the terminal wealth produced by SR is a constant 102.018 which means all the money is invested in the banking account.
With no surprises, as such a conservative control method, SR performs well in the department of risk management: it has apparent minimal variance, higher 20\% quantile and minimum value of the terminal utility compared to AR and TR.

\begin{figure}[h!]
   \centering

     \includegraphics[width=0.49\textwidth]{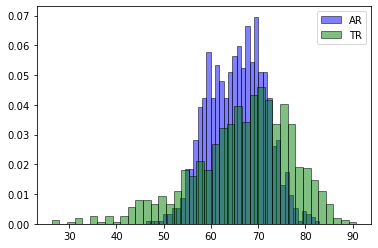}
     \includegraphics[width=0.49\textwidth]{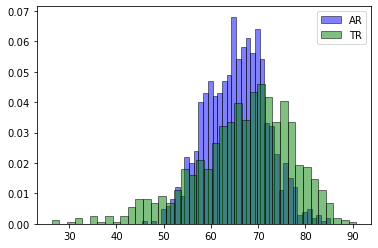}

     \caption{Histogram of the out-of-sample terminal utility $U$: AR vs TR. Left panel: $Q^H_0(1-\alpha)=0.199165$; right panel: $Q^H_0(1-\alpha)=0.092942$.}

     \label{fig:histogram1}
\end{figure}

For $Q^H_0(1-\alpha)\approx0.092942$, comparison of the performance of AR, TR, and SR on the same out-of-sample paths as in the previous case are reported in Table~\ref{table:table2}.
Since that $Q^H_0(1-\alpha)$ is smaller, the size of $\cC^{\alpha}_t$ along the simulated paths is in general smaller as a consequence.
Hence, we expect more aggressive strategies given by the robust approaches.
One needs to be aware that $Q^H_0(1-\alpha)\approx0.092942$ has an extremely low probability.
Thus, we expect the value of $Q^H_0(1-\alpha)$, and in turn the radius of $\cC^{\alpha}_t$ to be oscillating after $t=0$.
Nevertheless, we see from Table~\ref{table:table2} that there is an improvement of AR in this case.
Estimated expected utility $\widehat{V}^{\text{AR}}_0$ and the 90\% quantile of $\widehat{U}^{\text{AR}}$ are marginally larger than in the case of $Q^H_0(1-\alpha)=0.199165$.
Increase in the maximum value of $\widehat{U}^{\text{AR}}$ on the other hand is somewhat significant.
An unavoidable trade-off is that, even though only slightly, the strategy becomes more risky as the variance increases and 20\% quantile, as well as the minimum value, of $\widehat{U}^{\text{AR}}$ both decrease.
In line with our discussion, we also observe in Figure~\ref{fig:histogram1} that the distribution of $\widehat{U}^{\text{AR}}$ in the right panel has moderately larger tails on both left and right sides compared to that in the left panel.
Such change is expected to be more significant if the computation is done for larger $t_0$ and $T$.
To conclude, AR is more aggressive when the size of $\cC^{\alpha}_t$ is smaller but it is in general stable for our choice of parameters in the computation.
Regarding SR, we observe changes following a similar pattern as for AR.
However, such changes are so tiny and almost negligible.
Consequently, the computed SR strategies are considered as trivial and one needs to further reduce $Q^H_0(1-\alpha)$ in order to obtain a non-trivial SR optimal control.

The main argument for why SR being so conservative is that for relatively small historical data size $t_0$, the corresponding confidence region is usually too large.
On top of that, there is no shrinkage of the confidence region in static robust.
Hence, no matter at which time step, the worst case model in such a large set is strongly against the controller which implies that, in the context of optimal portfolio, the money should only be invested in the banking account.
Dynamic reduction of uncertainty is thereby an apparent advantage maintained by AR over SR.
In practice, static robust control should only be used when there is sufficient historical data.
One still needs be cautious of potential estimation error as, for uncertainty set with small size, the SR optimal control will heavily depend on the initial guess of the unknown distribution.
Due to the lack of dynamic learning, SR optimal control in such case will be biased if the initial guess has large distance to the true model.
On the contrary, learning is incorporated in adaptive robust and thus the corresponding control will be almost optimal for time steps close to $T$, and this feature will be carried out to earlier time steps following the dynamic programming principle.

\bibliographystyle{alpha}
\bibliography{nonparametric}

\end{document}